\def\vint{\mathop{\mathchoice%
         {\setbox0\hbox{$\displaystyle\intop$}\kern 0.22\wd0%
          \vcenter{\hrule width 0.6\wd0}\kern -0.82\wd0}%
         {\setbox0\hbox{$\textstyle\intop$}\kern 0.2\wd0%
          \vcenter{\hrule width 0.6\wd0}\kern -0.8\wd0}%
         {\setbox0\hbox{$\scriptstyle\intop$}\kern 0.2\wd0%
          \vcenter{\hrule width 0.6\wd0}\kern -0.8\wd0}%
         {\setbox0\hbox{$\scriptscriptstyle\intop$}\kern 0.2\wd0%
          \vcenter{\hrule width 0.6\wd0}\kern -0.8\wd0}}%
         \mathopen{}\int}
\newcommand{\Sobolevp}{ W^{1, p}(\real^{d+1}_+, \mu_{\alpha}^\lambda)}
\newcommand{\Sobolevpp}{ W^{1, p}(\real^{d+1}_+, \mu_{p-1}^\lambda)}
\newcommand{\Trace}{\mathscr {T}}
\newcommand{\tdQ}{ {\mathcal S(Q)}}
\newcommand{\tdP}{ {\mathcal S(P)}}
\newcommand{\mual}{{\mu_\alpha^\lambda}}
\newcommand{\supp}{{\rm supp}\,}
\newcommand{\omegaal}{{\omega_\alpha^\lambda}}
\newcommand{\Lip}{{\rm Lip}\,}
\newcommand{\N}{{\mathbb N}}
\newcommand{\dyadic}{\mathscr{Q}}
\newcommand{\real}{{\mathbb R}}
\newcommand{\dist}{\text{\rm dist}}
\newtheorem{thm}{Theorem}[section]
\newtheorem{cor}[thm]{Corollary}
\newtheorem{lem}{Lemma}[section]
\newtheorem{prop}[thm]{Proposition}
\newtheorem{claim}{Claim}[section]
\newtheorem{subclaim}{Subclaim}
\newtheorem{conj}[equation]{Conjecture}
\newtheorem{case}{Case}[section]
\newtheorem*{mysolution}{Solution}
\newtheorem{step}{Step}[section]
\theoremstyle{definition}
\newtheorem{defn}[thm]{Definition}
\newtheorem{example}[thm]{Example}
\newtheorem{prob}[equation]{Problem}
\newtheorem{ques}[equation]{Question}
\newtheorem{rem}{Remark}[section]
\newtheorem{rems}{Remarks}[section]
\newcounter {own}
\def\theown {\thesection       .\arabic{own}}
\newenvironment{pf}[1][]{%
 \vskip 3mm
 \noindent
 \ifthenelse{\equal{#1}{}}%
  {{\slshape Proof. }}%
  {{\slshape #1.} }%
 }%
{\qed\bigskip}
\newcounter{alphabet}
\newcommand{\W}{{\mathscr W}}
\newcommand{\id}{{\operatorname{id}}}
\newcommand{\essinf}{{\operatorname{ess\ inf}}}
\newcommand{\RNum}[1]{\uppercase\expandafter{\romannumeral #1\relax}}
\def\be{\begin{equation}}
\def\ee{\end{equation}}
\newcommand{\ben}{\begin{enumerate}}
\newcommand{\een}{\end{enumerate}}
\newcommand{\blem}{\begin{lem}}
\newcommand{\elem}{\end{lem}}
\newcommand{\bthm}{\begin{thm}}
\newcommand{\ethm}{\end{thm}}
\newcommand{\bcor}{\begin{cor}}
\newcommand{\ecor}{\end{cor}}
\newcommand{\beg}{\begin{examp}}
\newcommand{\eeg}{\end{examp}}
\newcommand{\begs}{\begin{examples}}
\newcommand{\eegs}{\end{examples}}
\newcommand{\bdefe}{\begin{defn}}
\newcommand{\edefe}{\end{defn}}
\newcommand{\bprob}{\begin{prob}}
\newcommand{\eprob}{\end{prob}}
\newcommand{\bques}{\begin{ques}}
\newcommand{\eques}{\end{ques}}
\newcommand{\bei}{\begin{itemize}}
\newcommand{\eei}{\end{itemize}}
\newcommand{\bcl}{\begin{claim}}
\newcommand{\ecl}{\end{claim}}
\newcommand{\bscl}{\begin{subclaim}}
\newcommand{\escl}{\end{subclaim}}
\newcommand{\bca}{\begin{case}}
\newcommand{\eca}{\end{case}}
\newcommand{\bstep}{\begin{step}}
\newcommand{\estep}{\end{step}}
\newcommand{\bsol}{\begin{mysolution}}
\newcommand{\esol}{\end{mysolution}}
\newcommand{\bcon}{\begin{conj}}
\newcommand{\econ}{\end{conj}}
\newcommand{\bcons}{\begin{conjs}}
\newcommand{\econs}{\end{conjs}}
\newcommand{\bprop}{\begin{prop}}
\newcommand{\eprop}{\end{prop}}
\newcommand{\br}{\begin{rem}}
\newcommand{\er}{\end{rem}}
\newcommand{\brs}{\begin{rems}}
\newcommand{\ers}{\end{rems}}
\newcommand{\bo}{\begin{obser}}
\newcommand{\eo}{\end{obser}}
\newcommand{\bos}{\begin{obsers}}
\newcommand{\eos}{\end{obsers}}
\newcommand{\bpf}{\begin{pf}}
\newcommand{\epf}{\end{pf}}
\newcommand{\ba}{\begin{array}}
\newcommand{\ea}{\end{array}}
\newcommand{\beq}{\begin{eqnarray}}
\newcommand{\beqq}{\begin{eqnarray*}}
\newcommand{\eeq}{\end{eqnarray}}
\newcommand{\eeqq}{\end{eqnarray*}}
\begin{document}
\title{\Large\bf  Borderline case of  traces and extensions for weighted Sobolev spaces
 \footnotetext{\hspace{-0.35cm}
$2010$ {\bf Mathematics Subject classfication}: 46E35, 42B35, 30L99
\endgraf{{\bf Key words and phases}: Sobolev space, borderline case, trace theorem, Besov-type space, Muckenhoupt  $A_p$ weights}
\endgraf{{\it ${}^{\mathbf{*}}$ Corresponding author}}
}
}
\author{Manzi Huang, Xiantao Wang, Zhuang Wang${}^{\mathbf{*}}$, and Zhihao Xu}
\date{ }
\maketitle
\begin{abstract}
In this paper, we study the traces and the extensions for weighted Sobolev spaces on upper half spaces  when the weights reach to the borderline cases. We first give a full characterization of the existence of trace spaces for these weighted Sobolev spaces, and then  study the trace parts and the extension parts between the weighted Sobolev spaces and  a new kind of Besov-type spaces (on hyperplanes) which are defined by using integral averages over selected layers of dyadic cubes. 

\end{abstract}

\section{Introduction}
The study of trace spaces (on the boundary of a domain) for Sobolev spaces on Euclidean domains is associated with the Dirichlet boundary value problem related to elliptic differential equations.  Certain types  of Dirichlet problems are guaranteed to have solutions  when the boundary data  arises as the trace of a Sobolev function. The work of Gagliardo identified the classical Besov spaces $B^{1-1/p}_{p, p}(\real^d)$ as the trace spaces for the first order Sobolev spaces $W^{1, p}(\real^{d+1}_+)$, where $1< p<\infty$, $\real^{d+1}_+:=\{(x, t): x\in \real^d, t>0\}$, and $1\leq d\in \N$, the set of all natural numbers, see {\cite[Theorem 1.\RNum1]{Ga}.} In the borderline case, that is, $p=1$, it was also shown that the trace spaces for $W^{1,1}(\real^{d+1}_+)$ are $L^1(\real^d)$, see  {\cite[Theorem 1.\RNum2]{Ga}.} Here we say that a function space $\mathbb X(\real^d)$ is the trace space for the function space $\mathbb Y(\real^{d+1}_+)$ if every function in $\mathbb Y(\real^{d+1}_+)$ has a trace in $\mathbb X(\real^d)$ and every function in $\mathbb X(\real^d)$ is the trace of some function in $\mathbb Y(\real^{d+1}_+)$. We refer to \cite{Ga} and monographs \cite{Pe,T,T4} for more trace results and properties of classical Besov spaces.

It is natural to seek for the trace  spaces  for Sobolev spaces associated with weights. The trace spaces for Sobolev spaces with the Muckenhoupt $A_p$ weights (in the following, we briefly denote by $A_p$ the set of all Muckenhoupt  $A_p$ weights) (see Definition \ref{A_p-def} below for the precise definition) were also well studied. For example, early trace results for Sobolev spaces with weights in $A_p$ were given by Nikolskii, Lizorkin and Va\v{s}arin, see\cite{Li,Ni,Va}. Some recent  results about trace spaces of  Sobolev spaces with weights belonging to $A_p$ were given by  \cite{KSW17,LS-21,Tyu1,Tyu2,Tyu4}. We also refer  to \cite{Ar, MirRus, Pe, SlBa, T, T4, IV, HaMa}  for more investigations in this line. {
 There is an advantage for the discussions on the trace spaces of Sobolev spaces when the weights are in $A_p$ because under this constraint, the related trace spaces always exist. However, to the best of our knowledge, there are very few results about the trace spaces for Sobolev spaces with weights not belonging to $A_p$ {  in the literatures.}

For $1\leq p<\infty$, $-1<\alpha\leq p-1$ and $\lambda\in \real$, let
$\omega^{\lambda}_{\alpha}$: $\mathbb{R}^{d+1}_{+}\rightarrow(0,\infty)$ denote the weight
\begin{equation}\label{eq-1.1}
(x_{1},x_{2},...,x_{d+1})\mapsto
\left\{\begin{array}{cl}
|x_{d+1}|^{\alpha}\log^{\lambda}\frac{4}{|x_{d+1}|},&x_{d+1}\in(0,1], \\
\log^{\lambda}4,&\,x_{d+1}\in(1,\infty),
\end{array}\right.
\end{equation}
and the weighted measure $\mu_{\alpha}^\lambda$ on $\real^{d+1}_+$ is defined by
\begin{equation}\label{weight}
\mu^{\lambda}_{\alpha}(E)=\frac{1}{\log^{\lambda}4}\int_{E}\omega_{\alpha}^{\lambda}\, dm_{d+1}.
\end{equation}

It is known that, when $-1<\alpha< p-1$, for each $p\geq 1$, the weight $\omega^{\lambda}_{\alpha}$ belongs to $A_p$ for every $\lambda\in \real$ (cf. Proposition \ref{A_p} below). But when $\alpha=p-1$, the situation is much different. Again, by Proposition \ref{A_p}, we see that if $p>1$, the weight $\omega^{\lambda}_{p-1}$ does  not belong to $A_p$ for  any $\lambda\in \real$, and even for the special case when $p=1$, the weight $\omega^{\lambda}_{0}$ does  not belong to $A_p$ for any $\lambda<0$.

\begin{figure}[htbp]
\begin{center}
\includegraphics{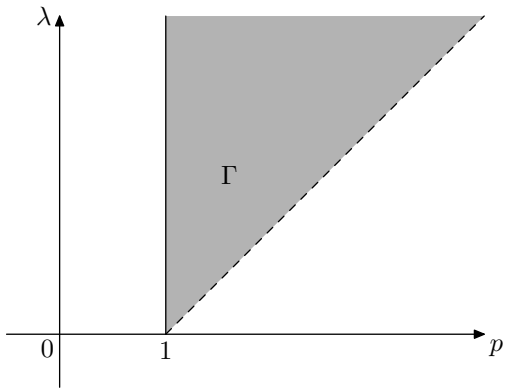}
\end{center}
\caption{The set $\Gamma$.} \label{fig-1}
\end{figure}

Let
$$\Pi=\{(p,\lambda)\in \mathbb{R}^2:\; p\geq 1\}$$ and
$$\Gamma=\{(p,\lambda)\in \Pi :\; p>1, \lambda>p-1\}\cup \{(p,\lambda)\in \Pi :\; p=1, \lambda\geq 0\}$$ (see Figure \ref{fig-1}).

The following example shows that there are $(p,\lambda)\in \Pi$ such that the related trace operators $\mathscr T$ on $W^{1, p}(\real^{d+1}_+, \mu_{p-1}^\lambda)$ do not exist. The reader is referred to Definition \ref{trace-defn} below for the precise definition of trace operators.

\begin{example}\label{example}
Suppose that $(p,\lambda)\in \Pi$ and  $\alpha=p-1$. For any pair $(p, \lambda)\notin\Gamma$,  there is a function $u\in \Sobolevp$ such that $\mathscr T u$ does not exist.
\end{example}
Naturally, one will ask whether for any $(p, \lambda)\in\Gamma$, the related trace operator $\mathscr T$ on $W^{1, p}(\real^{d+1}_+, \mu_{p-1}^\lambda)$ exists. Our following result shows that the answer to this question is positive.

\begin{thm}\label{Thm-1.2}
Suppose that $(p,\lambda)\in \Pi$. Then
\ben
\item[$(i)$]
the trace function $\mathscr T u$ belongs to $L^p(\real^d)$ for every $u\in W^{1, p}(\real^{d+1}_+, \mu_{p-1}^\lambda)$ if and only if $(p, \lambda)\in \Gamma$.
\item[$(ii)$]
for every $(p, \lambda)\in \Gamma$, the trace operator $\mathscr T:$ $W^{1, p}(\real^{d+1}_+, \mu_{p-1}^\lambda)\rightarrow L^p(\real^d)$ is bounded and linear.
\een
\end{thm}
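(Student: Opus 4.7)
The proof separates into necessity and sufficiency. The necessity part of~(i) is immediate from Example~\ref{example}: whenever $(p,\lambda)\in\Pi\setminus\Gamma$, that example produces a function $u\in\Sobolevpp$ whose trace $\mathscr{T}u$ fails to exist at all, and so \emph{a fortiori} cannot lie in $L^p(\real^d)$. Contrapositively, if $\mathscr{T}u\in L^p(\real^d)$ for every $u$, then $(p,\lambda)\in\Gamma$, giving the ``only if'' in~(i).

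For sufficiency --- that is, the ``if'' in~(i) together with the boundedness in~(ii) --- the plan is the classical one-variable fundamental theorem of calculus combined with H\"older's inequality along vertical lines, with the exponents tracked carefully so that the defining condition of $\Gamma$ emerges. Fix a smooth cutoff $\psi\colon[0,\infty)\to[0,1]$ with $\psi\equiv 1$ on $[0,1]$ and $\supp\psi\subset[0,2]$. Working with a representative of $u$ that is ACL along a.e.\ vertical line (such a representative exists because $\omega_{p-1}^\lambda$ is uniformly bounded above and below by positive constants on each compact subset of $\real^{d+1}_+$, so that $u\in W^{1,p}_{\loc}(\real^{d+1}_+)$ in the unweighted sense), the fundamental theorem of calculus yields the identity
$$u(x,0) = -\int_0^2 \psi(t)\,\partial_t u(x,t)\,dt - \int_1^2 \psi'(t)\,u(x,t)\,dt$$
for a.e.\ $x\in\real^d$. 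The second integral is the easy piece: $\psi'$ is supported on $[1,2]$, where $\omega_{p-1}^\lambda$ lies between two positive constants, so Jensen's inequality followed by Fubini controls its $L^p(\real^d)$-norm by a multiple of $\|u\|_{L^p(\mu_{p-1}^\lambda)}$.

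The first integral is the decisive piece. For $p>1$, H\"older's inequality with the weight $\omega_{p-1}^\lambda$ gives
$$\left|\int_0^2\psi(t)\,\partial_t u(x,t)\,dt\right|^p \leq \left(\int_0^2 |\partial_t u(x,t)|^p\,\omega_{p-1}^\lambda(t)\,dt\right)\left(\int_0^2 \psi(t)^{p'}\,\omega_{p-1}^\lambda(t)^{-1/(p-1)}\,dt\right)^{p-1},$$
and the second factor reduces to a finite constant precisely when the integral converges. On $(0,1]$ one has $\omega_{p-1}^\lambda(t)^{-1/(p-1)} = t^{-1}\log^{-\lambda/(p-1)}(4/t)$, and the substitution $s=\log(4/t)$ turns the problematic part into $\int_{\log 4}^\infty s^{-\lambda/(p-1)}\,ds$, which converges iff $\lambda/(p-1)>1$, i.e.\ iff $\lambda>p-1$ --- exactly the condition defining $\Gamma$ for $p>1$. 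Integrating the resulting pointwise bound in $x$ and invoking Fubini yields the desired $L^p(\real^d)$-bound on this piece by a multiple of $\|\partial_t u\|_{L^p(\mu_{p-1}^\lambda)}$. For $p=1$ no H\"older is needed; the pointwise inequality $\psi(t)\leq C\log^\lambda(4/t)$ on $(0,1]$ suffices, and this holds precisely when $\lambda\geq 0$, once again the defining condition of $\Gamma$ for $p=1$. Combining the two pieces delivers both~(ii) and the ``if'' half of~(i); linearity of $\mathscr{T}$ in~(ii) is immediate from its definition.

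The main technical obstacle I anticipate is identifying the trace $\mathscr{T}u$ of Definition~\ref{trace-defn} with the a.e.\ vertical limit $\lim_{t\to 0^+}u(\cdot,t)$ used implicitly above, since the absence of the $A_p$-property for $\omega_{p-1}^\lambda$ means that smooth functions are not automatically dense in $\Sobolevpp$. I would handle this by using the unweighted local $W^{1,p}$-regularity of $u$ away from $\{x_{d+1}=0\}$ to secure an ACL representative, then combining Fubini with the fact that the pointwise vertical limit exists a.e.\ (precisely when the weight integral displayed above is finite) to identify this limit with $\mathscr{T}u$ as defined in the paper.
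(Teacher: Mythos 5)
Your proposal is correct in outline but follows a genuinely different route from the paper. The paper never integrates along vertical lines: it works directly with the dyadic trace operators of Definition~\ref{trace-defn}, telescoping $\sum_k|\mathscr{T}_{k+1}f-\mathscr{T}_kf|$, controlling each difference via the Poincar\'e inequality \eqref{poincare} on the Whitney neighborhoods $\mathscr N(Q)$, and letting the weight enter only through $\mu_{p-1}^\lambda(\mathscr N(Q))\approx 2^{-k(d+p)}(2+k)^{\lambda}$; the defining condition of $\Gamma$ then appears as convergence of the discrete series $\sum_k(2+k)^{-\lambda/(p-1)}$ (for $p>1$), whereas in your argument the same condition appears in continuous form as convergence of $\int_0^1 t^{-1}\log^{-\lambda/(p-1)}(4/t)\,dt$. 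Your FTC-plus-H\"older computation is correct and arguably more transparent, and it produces the a.e.\ vertical limit with the right $L^p$ bound; the paper's dyadic route has the advantage that it bounds exactly the object the theorem speaks about ($\mathscr T u$ of Definition~\ref{trace-defn}) and that its intermediate estimate \eqref{estimate-T_k-T} is reused verbatim in the proof of Theorem~\ref{thm-1.3}, so nothing has to be redone for the Besov-space trace. The one place where your sketch is thinner than it should be is the final identification: showing $\mathscr{T}_k u(x)\to\lim_{t\to0^+}u(x,t)$ a.e.\ is not a consequence of Fubini alone. You need to estimate $\vint_{\mathscr N(Q_k^x)}|u(y,s)-\tilde u(y)|\,dm_{d+1}(y,s)$ by averaging $\int_0^{c2^{-k}}|\partial_t u(y,\tau)|\,d\tau$ over a cube around $x$, then use your H\"older tail bound (the tail of the weight integral tends to $0$) together with a maximal-function or Lebesgue-point argument, plus Lebesgue points of $\tilde u$ for the remaining average $\vint(\tilde u(y)-\tilde u(x))\,dm_d(y)$. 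This is standard and fixable, but as written it is the missing step that separates your vertical-line trace from the trace the theorem actually asserts; once supplied, both (i) and (ii) follow as you claim.
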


To investigate the characterization of the trace spaces for weighted Sobolev spaces $W^{1, p}(\real^{d+1}_+, \mu_{p-1}^\lambda)$ with $(p, \lambda)\in\Gamma$ is our main purpose of this paper.
In \cite{KSW17},  Koskela, Soto and the third author of this paper considered the weights $\omega^{\lambda}_{\alpha}$ and the related measures $\mu_{\alpha}^\lambda$ when $\lambda=0$ and $\alpha<p-1$, and they obtained the characterization of the corresponding trace spaces by the aid of a class of Besov spaces. But they did not consider the borderline case $\alpha=p-1$ because  
 the discussions in \cite{KSW17} do require the constraint $\alpha<p-1$, and thus, the methods are invalid for the case $\alpha=p-1$. 
  Hence it needs new ideas  to deal with this case.

Under the inspiration of the approach of using integral averages over dyadic cubes in \cite{KSW17} and the idea of choosing a system of tilings in \cite{Tyu2}, we construct a new kind of Besov-type spaces $\mathcal B^{\gamma}_{p}(\real^d)$ based on the integral averages over the so-called {\it selected layers of dyadic cubes} (see Definition \ref{Besov-space-0} below for the details). By replacing $L^p(\real^d)$ in Theorem \ref{Thm-1.2}$(ii)$
with $\mathcal B^{\gamma}_{p}(\real^d)$, we get the following related trace result.

\begin{thm}\label{thm-1.3}
Suppose that $(p,\lambda)\in \Gamma$. Then the trace operator $\mathscr T: W^{1, p}(\real^{d+1}_+, \mu_{p-1}^\lambda)\rightarrow \mathcal B^{\gamma}_{p}(\real^d)$
 is bounded and linear provided that $0<\gamma<\lambda-(p-1)$ if $p>1$ or $0<\gamma\leq \lambda$ if $p=1$.
\end{thm}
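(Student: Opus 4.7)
\medskip
\noindent\textbf{Proof plan.} The plan is to build on Theorem~\ref{Thm-1.2}, which already supplies the linearity and the $L^p$--bound for $\mathscr T$ under the assumption $(p,\lambda)\in\Gamma$, and to extract the remaining seminorm contribution defining $\|\mathscr T u\|_{\mathcal B^{\gamma}_p(\real^d)}$ via a cube-by-cube oscillation estimate adapted to the selected dyadic layers. Since $\mathscr T$ is already known to be linear as a map into $L^p(\real^d)$, linearity passes for free to the Besov target, and the only real task is the boundedness inequality.

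\medskip
The key step is a weighted Poincar\'e--type estimate at the scale of a single selected dyadic cube. For each dyadic cube $Q\subset\real^d$ belonging to a selected layer, let $\mathcal S(Q)=Q\times(0,\ell(Q)]$ denote the Carleson box above~$Q$. I would aim at an inequality of the form
$$
\int_Q \bigl|\mathscr T u(x)-(\mathscr T u)_Q\bigr|^p\,dx \;\lesssim\; \Psi_{\lambda,p,\gamma}(\ell(Q))\int_{\mathcal S(Q)} |\nabla u|^p\,d\mu_{p-1}^{\lambda},
$$
where $\Psi_{\lambda,p,\gamma}$ is a scale-dependent coefficient whose size, after being weighted by the factors appearing in the Besov seminorm and summed over all selected layers, cancels the dyadic-generation factors in the definition of $\mathcal B^{\gamma}_p$. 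The natural route is to join each $x\in Q$ to the top slice $Q\times\{\ell(Q)\}$ by a Whitney-type vertical path, use the fundamental theorem of calculus, and apply H\"older's inequality with the dual weight; the vertical step produces the critical dual integral
$$
\int_0^{\ell(Q)} s^{-1}\log^{-\lambda/(p-1)}(4/s)\,ds\qquad(p>1),
$$
which is finite precisely because $\lambda>p-1$, that is, because $(p,\lambda)\in\Gamma$. Tracking the dependence of this integral on $\ell(Q)$ across scales, together with the unweighted Poincar\'e inequality on the top slice and the bounded overlap of the Carleson boxes $\mathcal S(Q)$ at each generation, then yields $\|\mathscr T u\|_{\mathcal B^{\gamma}_p(\real^d)}^p\lesssim \|u\|_{W^{1,p}(\real^{d+1}_+,\mu_{p-1}^{\lambda})}^p$ by summation over the selected layers.

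\medskip
\noindent\textbf{Main obstacle.} The delicate point is the borderline nature of the weight: $\omega_{p-1}^{\lambda}$ fails to belong to $A_p$, so no standard weighted Poincar\'e inequality is available, and the method of \cite{KSW17} (which requires $\alpha<p-1$) breaks down. The whole argument must instead be driven by the explicit logarithmic shape of the weight, and the admissible range $0<\gamma<\lambda-(p-1)$ (respectively $0<\gamma\le\lambda$ for $p=1$) reflects exactly the sharp convergence threshold of the dual integral above. Every logarithmic correction in the H\"older step, the chaining, and the final summation must be bookkept carefully so that the asymptotic $\log^{-(\lambda-(p-1))/(p-1)}$ obtained from the dual integral marries correctly with the logarithmic scaling implicit in the selected-layer Besov norm. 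The case $p=1$ needs a separate but simpler argument, in which H\"older's inequality is replaced by an $L^\infty$--bound on the reciprocal of $\omega_{0}^{\lambda}$ along the relevant vertical segment; this is what permits the endpoint $\gamma=\lambda$ to be included.
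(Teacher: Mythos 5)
Your plan is sound for $p>1$ but has a genuine gap exactly at the endpoint $p=1$, $\gamma=\lambda$, which the theorem asserts. Quantify your own scheme: for a selected cube $Q\in\dyadic_{d,2^j}$ with $r=\ell(Q)=2^{-2^j}$, the whole-segment $L^\infty$ bound on $1/\omega_0^{\lambda}$ gives $\Psi(r)\approx\log^{-\lambda}(4/r)\approx(2^j+2)^{-\lambda}$, so generation $j$ contributes roughly $(2^j+2)^{\gamma-\lambda}\int_{\{0<t\lesssim 2^{-2^j}\}}|\nabla u|\,d\mu_0^{\lambda}$ (locally). Carleson boxes from different selected generations all contain the same boundary layer, so a gradient at height $t$ is counted once for every $j$ with $2^{-2^j}\gtrsim t$, i.e.\ about $\log_2\log_2(1/t)$ times; at $\gamma=\lambda$ each occurrence carries coefficient $\approx 1$, and your summation over layers produces an unbounded factor of order $\log\log(1/t)$ instead of $\|\nabla u\|_{L^1(\mu_0^{\lambda})}$. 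Hence the single $L^\infty$ application along the whole vertical segment only yields $0<\gamma<\lambda$; your remark that this step ``permits the endpoint $\gamma=\lambda$'' is precisely where the argument fails. (For $p>1$ your dual-weight H\"older computation is fine: the per-generation coefficient $(2^j+2)^{\gamma-\lambda+p-1}$ is geometrically summable exactly when $\gamma<\lambda-(p-1)$, matching the stated open range.)

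The missing idea, and the paper's actual mechanism, is to keep the vertical contribution resolved layer by layer rather than compressing it into one H\"older/$L^\infty$ step over the full height. The paper writes $\mathscr T u-\mathscr T_{2^j}u=\sum_{n\ge 2^j}(\mathscr T_{n+1}u-\mathscr T_{n}u)$ and bounds each increment via the unweighted Poincar\'e inequality on the Whitney layer at height $\approx 2^{-n}$, so the gradient at height $t\approx 2^{-n}$ enters with the factor $(n+2)^{-\lambda}\approx\log^{-\lambda}(4/t)$ instead of your cruder $\log^{-\lambda}(4/\ell(Q))$; after Fubini in the pair (selected generation $j$, fine scale $n$), the accumulated coefficient of layer $n$ is $(n+2)^{-\gamma}\sum_{2^j\le n}(2^j+2)^{\gamma}\approx 1$, where the last comparability uses precisely the doubly exponential spacing of the selected layers. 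This is what makes $\gamma=\lambda$ admissible for $p=1$; for $p>1$ the paper instead applies H\"older in the discrete sum over $n$ with exponent $(\lambda-\gamma)/(p-1)>1$. Two smaller omissions you would also need to repair: the trace is defined as an a.e.\ limit of Whitney averages $\mathscr T_k u$, not as a vertical boundary limit, so the fundamental-theorem-of-calculus step along vertical lines requires an identification argument; and the Besov seminorm also pairs each selected cube with neighbors from the previous selected generation (of much larger side length $2^{-2^{j-1}}$), a term the paper reduces to the same-generation case by a reindexing and Fubini step that your sketch does not address.
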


By the aid of $\mathcal B^{\lambda}_{p}(\real^d)$, we also obtain the following extension result.

\begin{thm}\label{thm-1.4}
 Suppose that $(p,\lambda)\in \Gamma$.
  Then there exists a bounded and linear extension operator $\mathscr E: \mathcal B^{\lambda}_{p}(\real^d) \rightarrow W^{1, p}(\real^{d+1}_+, \mu_{p-1}^\lambda)$ such that $\mathscr T\circ \mathscr E=\id$ on $\mathcal B^{\lambda}_{p}(\real^d)$, where ``$\,\id$'' denotes ``the identity map''.
\end{thm}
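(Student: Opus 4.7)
\noindent\textit{Proposed approach.} The plan is to construct $\mathscr E$ as a Whitney-type averaging extension whose geometry is aligned with the selected layers of dyadic cubes used to define $\mathcal B^{\lambda}_{p}(\real^d)$. First I would fix a Whitney-type decomposition $\mathcal W=\{Q\}$ of $\real^{d+1}_+$ into cubes with $\ell(Q)\sim\dist(Q,\real^d)$, arranged so that the vertical projection $\pi(Q)\subset\real^d$ of each Whitney cube $Q$ is a dyadic cube belonging to the selected layer at height $\ell(Q)$. Take a smooth partition of unity $\{\psi_Q\}$ subordinate to slight enlargements of these cubes, with $\sum_{Q\in\mathcal W}\psi_Q\equiv 1$ and $\|\nabla\psi_Q\|_\infty\lesssim\ell(Q)^{-1}$. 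For $f\in\mathcal B^{\lambda}_{p}(\real^d)$, define
\[
\mathscr E f(x,t)=\sum_{Q\in\mathcal W} f_{\pi(Q)}\,\psi_Q(x,t),
\]
where $f_{\pi(Q)}$ denotes the mean of $f$ over $\pi(Q)$.

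The heart of the argument is the gradient estimate. Using $\sum_{Q}\psi_Q\equiv 1$ to telescope, for $(x,t)\in Q\in\mathcal W$ one obtains
\[
|\nabla\mathscr E f(x,t)|^p\lesssim \ell(Q)^{-p}\sum_{Q'\sim Q}\bigl|f_{\pi(Q')}-f_{\pi(Q)}\bigr|^p,
\]
where $Q'\sim Q$ ranges over the neighbors of $Q$. Since $\omega^{\lambda}_{p-1}(x,t)\sim \ell(Q)^{p-1}\log^{\lambda}(4/\ell(Q))$ on a small Whitney cube $Q$ and $|Q|\sim \ell(Q)^{d+1}$, integrating against $\mu^{\lambda}_{p-1}$ gives
\[
\int_{Q}|\nabla\mathscr E f|^p\,d\mu^{\lambda}_{p-1}\lesssim |\pi(Q)|\,\log^{\lambda}\!\bigl(4/\ell(Q)\bigr)\sum_{Q'\sim Q}\bigl|f_{\pi(Q')}-f_{\pi(Q)}\bigr|^p.
\]
Summing over $\mathcal W$ and regrouping the Whitney cubes layer by layer, the factor $\log^{\lambda}(4/\ell(Q))$ matches exactly the weight appearing in $\mathcal B^{\lambda}_{p}(\real^d)$, so the sum is bounded by $\|f\|^p_{\mathcal B^{\lambda}_{p}}$. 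The $L^p(\mu^{\lambda}_{p-1})$-bound for $\mathscr E f$ itself is handled in the same spirit, by splitting the upper half-space into a bounded-weight region (large $t$) and a boundary region where a weighted Hardy-type estimate, organized by the same selected layers, delivers the bound.

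For the trace identity $\mathscr T\circ\mathscr E=\id$, I would appeal to the definition of the trace operator: for every Lebesgue point $x$ of $f$, the finitely many Whitney cubes whose closures meet the vertical segment $\{x\}\times(0,t]$ all project to dyadic cubes containing $x$ whose side lengths tend to $0$ as $t\to 0^+$, so the averages $f_{\pi(Q)}$ appearing in the definition of $\mathscr E f$ converge to $f(x)$. Combined with $\sum_Q \psi_Q\equiv 1$, this yields $\mathscr E f(x,t)\to f(x)$ for almost every $x$, hence $\mathscr T\mathscr E f=f$.

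The main obstacle is the borderline nature of the weight. Because $\omega^{\lambda}_{p-1}\notin A_p$ when $p>1$, classical extension machinery (weighted Poincar\'e inequalities, Hardy-type estimates on cubes, Muckenhoupt-reverse-H\"older arguments, or the techniques of \cite{KSW17} which require $\alpha<p-1$) is unavailable. The argument must instead rely on the exact combinatorial matching between the selected layers and the Whitney decomposition: the $\log^{\lambda}(4/t)$ factor on the Sobolev side has to cancel precisely with the weight built into $\mathcal B^{\lambda}_{p}(\real^d)$, and neighboring Whitney cubes must project to dyadic cubes whose oscillations are controllable by the Besov norm. Arranging this correspondence cleanly—so that the neighbor-sum on one Whitney cube corresponds to differences of averages that appear in the $\mathcal B^{\lambda}_{p}$ norm—and in particular treating the case $p=1$ separately (where the borderline is even more delicate) will be the most technical steps.
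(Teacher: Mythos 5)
There is a genuine gap, and it sits precisely at the borderline difficulty you flag at the end. As literally described, your setup is inconsistent: a Whitney decomposition of $\real^{d+1}_+$ with $\ell(Q)\sim\dist(Q,\real^d\times\{0\})$ must contain cubes of \emph{every} dyadic side length $2^{-j}$ near the boundary, whereas the selected layers only contain the sparse scales $2^{-2^k}$, so one cannot arrange that each Whitney cube projects onto a selected-layer cube (a cube of side $2^{-j}$ projects onto a cube of side $2^{-j}$, not $2^{-2^k}$). On the consistent reading of your formula, $f_{\pi(Q)}$ is the average of $f$ over the projection of $Q$ at the cube's \emph{own} scale, i.e.\ the standard Whitney extension of \cite{KSW17}. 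Then your telescoped gradient bound, after regrouping by layers and using $\mu^{\lambda}_{p-1}(\W(Q))\approx 2^{-j(d+p)}(j+2)^{\lambda}$, produces sums of the form $\sum_{j\ge 1}(j+2)^{\lambda}\sum_{Q\in\dyadic_{d,j}}m_d(Q)\sum_{Q'\sim Q}|f_{Q'}-f_Q|^p$ over \emph{all} scales $j\in\N$. The norm of $\mathcal B^{\lambda}_{p}(\real^d)$ contains only the scales $j=2^k$ and gives no control on oscillations between adjacent cubes at the intermediate scales $2^k<j<2^{k+1}$; bounding each intermediate-scale term trivially by $\|f\|^p_{L^p(\real^d)}$ is useless because the $k$-th block contains about $2^k$ such scales, each carrying a weight $(j+2)^{\lambda}\approx 2^{k\lambda}$. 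So the claimed estimate $\|\nabla(\mathscr Ef)\|^p_{L^p(\real^{d+1}_+,\mu^{\lambda}_{p-1})}\lesssim\|f\|^p_{\mathcal B^{\lambda}_{p}(\real^d)}$ does not follow from your construction, and one can arrange functions that oscillate only at intermediate scales, with all selected-layer averages unchanged, for which your right-hand side stays finite while the weighted gradient integral of the standard Whitney extension diverges.

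The paper's proof repairs exactly this point by changing the averages, not the Whitney cubes: $\mathscr Ef=\sum_{Q\in\dyadic^1_d}f_{\tdQ}\,\psi_Q$, where $\tdQ\in\dyadic_{d,2^k}$ is the selected ancestor of $Q$ whenever $2^k\le j<2^{k+1}$ (Definition \ref{extension-defn}). Consequently all Whitney cubes lying strictly inside a fixed coarse cube of layer $2^k$ receive the same constant, so $\nabla(\mathscr Ef)\equiv 0$ there (the set $Y_{1,j}$ in the proof of $H^C$); only cubes touching the boundary of their selected ancestor contribute, and their number at scale $j$ is of surface type, $2^{(j-2^k)d}-(2^{j-2^k}-2)^d$, which makes the intermediate-scale sum converge without any Besov information at those scales. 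The differences that then appear, $|f_{Q'}-f_{\tdP}|$ with $Q'\asymp\tdP$, are exactly those present in the $\mathcal B^{\lambda}_{p}$ norm, which is where the $\log^{\lambda}$ factor is matched. Your trace-identity argument and the $L^p$ bound are fine in spirit and close to the paper's, but the gradient estimate needs this locally-constant-at-intermediate-scales device, which your proposal does not supply.
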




The combination of Theorem \ref{thm-1.3} and Theorem \ref{thm-1.4} implies that, when $p=1$, we get the full characterization of the trace spaces for weighted Sobolev spaces $W^{1,1}(\mathbb{R}^{d+1}_{+},\mu_{0}^{\lambda})$, which is formulated in the following corollary.
\begin{cor}\label{cor-1.5}
Let $\lambda>0$. Then  the Besov-type space $\mathcal B^{\lambda}_{1}(\mathbb{R}^{d})$ is the trace space of the weighted Sobolev space $W^{1,1}(\mathbb{R}^{d+1}_{+},\mu_{0}^{\lambda})$.
\end{cor}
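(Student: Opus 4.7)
The plan is to observe that Corollary \ref{cor-1.5} is essentially a direct consequence of combining Theorem \ref{thm-1.3} and Theorem \ref{thm-1.4} in the special case $p=1$. First I would verify that the hypothesis $\lambda>0$ puts the pair $(1,\lambda)$ into the region $\Gamma$, since by definition $\Gamma$ contains $\{(p,\lambda)\in\Pi:\ p=1,\ \lambda\geq 0\}$; this makes both Theorem \ref{thm-1.3} and Theorem \ref{thm-1.4} applicable with $p=1$.

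Next, to show that every function in $W^{1,1}(\mathbb{R}^{d+1}_{+},\mu_{0}^{\lambda})$ has a trace lying in $\besov$ with $\lambda$ itself as the smoothness index, I would invoke Theorem \ref{thm-1.3}. The key point is that in the $p=1$ case the allowed range of the smoothness parameter $\gamma$ in Theorem \ref{thm-1.3} is $0<\gamma\leq\lambda$, so the endpoint choice $\gamma=\lambda$ is permitted (this is in sharp contrast with the $p>1$ case, where only $0<\gamma<\lambda-(p-1)$ is obtained and the endpoint is missed). Thus the trace operator
\[
\mathscr{T}:W^{1,1}(\mathbb{R}^{d+1}_{+},\mu_{0}^{\lambda})\longrightarrow \mathcal{B}^{\lambda}_{1}(\mathbb{R}^{d})
\]
is bounded and linear.

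In the opposite direction, I would apply Theorem \ref{thm-1.4} with $p=1$. This theorem directly supplies a bounded linear extension operator $\mathscr{E}:\mathcal{B}^{\lambda}_{1}(\mathbb{R}^{d})\to W^{1,1}(\mathbb{R}^{d+1}_{+},\mu_{0}^{\lambda})$ satisfying $\mathscr{T}\circ \mathscr{E}=\id$ on $\mathcal{B}^{\lambda}_{1}(\mathbb{R}^{d})$. In particular, every $f\in \mathcal{B}^{\lambda}_{1}(\mathbb{R}^{d})$ is the trace of the Sobolev function $\mathscr{E}f$, which together with the trace boundedness established in the previous step yields the claimed characterization.

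There is no genuine obstacle at this stage because the analytic work has been carried out in Theorems \ref{thm-1.3} and \ref{thm-1.4}; the only subtle point worth emphasizing in the write-up is the endpoint matching $\gamma=\lambda$, which is available precisely when $p=1$ and is the reason the full trace characterization is obtained only in the $L^1$-borderline situation rather than for general $p>1$.
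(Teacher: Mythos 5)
Your proposal is correct and follows exactly the paper's route: the paper obtains Corollary \ref{cor-1.5} precisely by combining Theorem \ref{thm-1.3} (with $p=1$, where the endpoint $\gamma=\lambda$ is admissible) and Theorem \ref{thm-1.4}, noting that $\lambda>0$ places $(1,\lambda)$ in $\Gamma$. Your emphasis on the endpoint matching $\gamma=\lambda$ being available only at $p=1$ is the same observation underlying the paper's statement of the corollary in the borderline case.
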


 Notice that  if  $\lambda>0$, the weights $\omega_0^{\lambda}$ are actually in $A_1$ (cf. Proposition \ref{A_p}). Hence the trace spaces $\mathcal B^{\lambda}_{1}(\mathbb{R}^{d})$ are equivalent to the ones defined in  \cite{Tyu2}. The  trace spaces in \cite{Tyu2} are complicated since it needs to work for all weights in $A_1$, and then, our concrete trace spaces $\mathcal B^{\lambda}_{1}(\mathbb{R}^{d})$, which can be regarded as an example, would be helpful to understand it.

We remark that we do not consider the case when $\alpha>p-1$. This is because, when $\alpha>p-1$, by slightly modifying the proof of Proposition \ref{A_p} and the construction of Example \ref{example}, we shall know that  for any $(p, \lambda)$ with $p\geq 1$ and $\lambda\in \real$, the weight $\omega_\alpha^\lambda$ does not belong to $A_p$, and the trace operator $\mathscr T$ on $W^{1, p}(\real^{d+1}_+, \mu_\alpha^\lambda)$ does not exist either.


The paper is organized as follows. In Section \ref{sec-2}, some necessary terminology will be introduced and a proposition will be proved, which classifies the weights $w^\lambda_\alpha$ into two classes: $A_p$ class and non $A_p$ class. The proofs of Example \ref{example},  Theorem \ref{Thm-1.2} and Theorem \ref{thm-1.3} will be presented in Section \ref{sec-3}. Theorem \ref{thm-1.4} will be shown in Section \ref{sec-4}.

\section{Preliminaries}\label{sec-2}
Throughout this paper, the letter $C$ (sometimes with a subscript) will denote a positive constant that usually depend only on the given parameters of the spaces and may change at different occurrences; if $C$ depends on $a,$ $b,$ $\ldots$, then we write $C = C(a,b,\ldots).$ The notation $A\lesssim B$ (resp. $A \gtrsim B$) means that there is a constant $C\geq 1$ such that $A \leq C \cdot B$ (resp. $A \geq C \cdot B).$  If $A\lesssim B$ and $A \gtrsim B$, then we write $A\approx B$. If $(X, \mu)$ is a measure space, for every function $f\in L^1_{\rm loc}(X, \mu)$ and every measurable subset $A\subset X$, let $\vint_Afd\mu$ stand for the integral average $\frac{1}{\mu(A)}\int_Afd\mu$, i.e.,
$$\vint_Afd\mu=\frac{1}{\mu(A)}\int_Afd\mu.$$

Let us recall the definition of the Muckenhoupt $A_p$ weights with $1\leq p<\infty$ (see, e.g. \cite[Charpter 15]{HKM} or \cite{DILTV}).

\begin{defn}\label{A_p-def}
We say that a weight $\omega: \real^{d+1}_+\rightarrow [0, \infty)$ belongs to the Muckenhoupt class $A_p$ if $\omega$ is locally integrable and there is a constant $C>0$ such that for any cube $Q\subset \real^{d+1}_+$,
\[\left(\vint_Q \omega\, dm_{d+1}\right)\left(\vint_Q \omega^{-1/(p-1)}\, dm_{d+1}\right)^{p-1}\leq C\ \ \ \text{if}\ \ p>1\]
and
\[\left(\vint_Q \omega\, dm_{d+1}\right)\leq C \,\essinf_{x\in Q} \, {\omega(x)} \ \ \ \text{if}\ \ p=1.\]
\end{defn}

\begin{prop}\label{A_p}
\noindent $(a)$ Suppose that $(p, \lambda)\in \Pi$, and let $-1<\alpha<p-1$. Then the weights $\omega_\alpha^\lambda$ $($see \eqref{eq-1.1} for the definition$)$ belong to $A_p$ for all pairs $(p, \lambda)$ with $p\geq 1$ and $\lambda\in \real$.

\noindent $(b)$ Suppose that   $(p, \lambda)\in \Pi$, and let $\alpha=p-1$. Then the weights $\omega_\alpha^\lambda$  belong to $A_p$ if and only if $p=1$ and $\lambda\geq 0$.
\end{prop}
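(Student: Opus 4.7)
My plan is to reduce both parts to a one-dimensional problem. Since $\omega_\alpha^\lambda(x_1,\ldots,x_d,t)$ depends only on $t$, for any cube $Q = Q'\times [a,b]\subset \real^{d+1}_+$ one has $\vint_Q \omega_\alpha^\lambda\,dm_{d+1} = \vint_{[a,b]} v(t)\,dt$ and similarly for $(\omega_\alpha^\lambda)^{-1/(p-1)}$, where $v(t) = t^\alpha\log^\lambda(4/t)$ for $t\in(0,1]$ and $v(t) = \log^\lambda 4$ for $t>1$. Hence the $(d+1)$-dimensional $A_p$ condition is equivalent to the $A_p$ condition for $v$ on intervals $I\subset(0,\infty)$. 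For intervals with $\inf I\geq 1$ the weight is constant, and for intervals $[a,b]\subset(0,1]$ with $a\geq b/4$ the weight and its reciprocal are pointwise comparable on $I$, so the $A_p$ inequality is trivial in both cases; only intervals $I=[a,b]\subset(0,1]$ with $a<b/4$ require genuine analysis, and up to absorbing constants it suffices to treat $I=[0,b]$ with $b\in(0,1)$.

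The technical core is the slowly-varying estimate
\[
\int_0^b t^\beta\log^\kappa(4/t)\,dt\ \approx\ b^{\beta+1}\log^\kappa(4/b)\qquad (\beta>-1,\ \kappa\in\real,\ 0<b\leq 1),
\]
which I would prove by a dyadic decomposition of $(0,b]$: on $[b/2^{k+1},b/2^k]$ one has $\log(4/t)=\log(4/b)+O(k)$, so the log factor produces only a bounded multiplicative error after summing the geometric series in $2^{-k(\beta+1)}$.

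For Part (a), where $-1<\alpha<p-1$, the exponents $\beta=\alpha$ and $\beta=-\alpha/(p-1)$ both satisfy $\beta>-1$. The estimate then gives $\vint_{[0,b]}v\approx b^\alpha\log^\lambda(4/b)$ and $\vint_{[0,b]}v^{-1/(p-1)}\approx b^{-\alpha/(p-1)}\log^{-\lambda/(p-1)}(4/b)$; multiplying produces an $A_p$ product bounded independently of $b$ and $\lambda$. When $p=1$, $v$ is essentially monotone on $(0,1]$ so $\essinf_{[0,b]}v\approx v(b)\approx\vint_{[0,b]}v$, giving $A_1$.

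For Part (b) with $\alpha=p-1$: if $p>1$, then $v^{-1/(p-1)}=t^{-1}\log^{-\lambda/(p-1)}(4/t)$, and the substitution $u=\log(4/t)$ shows this is integrable near $0$ if and only if $\lambda>p-1$. When $\lambda\leq p-1$ the reciprocal average is infinite and $A_p$ fails immediately. When $\lambda>p-1$ the same substitution yields $\int_0^b v^{-1/(p-1)}\,dt\approx\log^{1-\lambda/(p-1)}(4/b)$, while the key estimate gives $\int_0^b v\,dt\approx b^p\log^\lambda(4/b)$; taking the $A_p$ product one obtains a quantity of order $\log^{p-1}(4/b)\to\infty$ as $b\to 0^+$, so $A_p$ again fails. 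For the remaining case $p=1$, $\alpha=0$: if $\lambda\geq 0$, then $\log^\lambda(4/t)$ is non-increasing on $(0,1]$ so $\essinf_{[0,b]}v=\log^\lambda(4/b)$, which matches $\vint_{[0,b]}v$ and yields $A_1$; if $\lambda<0$, then $v(t)\to 0$ as $t\to 0^+$ so $\essinf_{[0,b]}v=0$ while $\vint_{[0,b]}v>0$, violating $A_1$. The only genuine obstacle is proving the slowly-varying integral estimate cleanly and uniformly; everything else is bookkeeping together with the change of variable $u=\log(4/t)$.
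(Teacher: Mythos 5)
Your proposal is correct and follows essentially the same route as the paper: both verify or refute the $A_p$ condition through the same one-dimensional integrals, using the same test cubes shrinking to the boundary for the negative direction (the dual integral $\int_{\log(4/b)}^{\infty}u^{-\lambda/(p-1)}\,du$ diverging when $\lambda\le p-1$, the $A_p$ product growing like $\log^{p-1}(4/b)$ when $\lambda>p-1$, and $\essinf=0$ when $p=1$, $\lambda<0$), and the same average-versus-infimum comparison for the positive direction $p=1$, $\lambda\ge0$. The differences are only organizational — you make the reduction to one dimension and the slowly-varying estimate $\int_0^b t^{\beta}\log^{\kappa}(4/t)\,dt\approx b^{\beta+1}\log^{\kappa}(4/b)$ explicit and you supply part (a), which the paper omits as a ``direct computation''; just note that for $p=1$, $\alpha<0$, $\lambda<0$ the weight is not literally monotone on $(0,1]$ (it has an interior minimum at $t=4e^{\lambda/\alpha}$), but it is comparable to a decreasing function with constants depending only on $\alpha,\lambda$, which is all your argument needs.
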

\begin{proof}
$(a)$ The proof follows from a direct computation. We omit it here.

$(b)$ We first show that if $p=1$ and $\lambda\geq 0$, the weights $\omega_\alpha^\lambda$  belong to $A_p$. If $p=1$ and $\lambda=0$, then $\omega_0^\lambda=1$, and obviously it belongs $A_1$. If  $p=1$ and $\lambda>0$, then for any $t\geq0$ and $h>0$, by letting $Q=(0,h]^{d}\times(t,t+h]$ and by using the integration by parts, we have
\begin{equation}\label{tmp-A_1}
\vint_Q \omega_0^\lambda\, dm_{d+1}\leq C_{\lambda}\log^{\lambda}\frac{4}{t+h}=C_{\lambda}\cdot\essinf_{x\in Q} \omega_0^\lambda(x),
\end{equation}
which implies that the weight $\omega_0^\lambda$ belongs to $A_1$.

For the converse, it suffices to show that for any pair $(p, \lambda)$ with $p>1$ and $\lambda\in \real$ or $p=1$ and  $\lambda<0$, the weight $\omega_\alpha^\lambda$ does not belong to $A_p$.
 For  $p>1$, we choose the cube $Q=(0,2^{-k}]^{d+1}\subset\real^{d+1}_{+}$. Then
$$
\vint_Q |x_{d+1}|^{\alpha}\log^{\lambda}\frac{4}{|x_{d+1}|}\, dm_{d+1}=2^{k}\int_{0}^{2^{-k}}t^{p-1}\log^{\lambda}\frac{4}{t}dt
$$
and
$$
\vint_Q \left(|x_{d+1}|^{\alpha}\log^{\lambda}\frac{4}{|x_{d+1}|}\right)^{-1/(p-1)}\, dm_{d+1}=2^{k}\int_{(k+2)\log2}^{\infty}t^{-\frac{\lambda}{p-1}}dt.
$$
It follows that if $\lambda>p-1$, then
$$
\left(\vint_Q |x_{d+1}|^{\alpha}\log^{\lambda}\frac{4}{|x_{d+1}|}\, dm_{d+1}\right)\left(\vint_Q (|x_{d+1}|^{\alpha}\log^{\lambda}\frac{4}{|x_{d+1}|})^{-1/(p-1)}\, dm_{d+1}\right)^{p-1}\gtrsim(k+2)^{p-1},
$$
and if $\lambda\leq p-1$, then for each $k\geq0$, $$\int_{(k+2)\log2}^{\infty}t^{-\frac{\lambda}{p-1}}dt=\infty.$$ Hence $\omega_\alpha^\lambda$ does not belong to $A_p$ for {any $\lambda \in \real$}. For $p=1$, the estimate \eqref{tmp-A_1} will fail for any $\lambda<0$, since if $t=0$, we have $\essinf_{x\in Q} \omega_0^\lambda(x)=0$. This implies that $\omega_0^\lambda$ does not belong to $A_1$ for any $\lambda<0$. Hence the proposition is proved.
%
\end{proof}

Let us recall that $\mu_{\alpha}^\lambda$ denotes the weighted measure on $\real^{d+1}_+$ defined as in \eqref{weight}.
Then by a direct computation, we know that the measure $\mual$ is doubling on $\real^{d+1}_+$, i.e., there exists a constant $C\geq 1$ such that for all $x\in \real^{d+1}_+$  and $r>0$,
\[\mual(\mathbb{B}(x, 2r)\cap \real^{d+1}_+)\leq C\mual(\mathbb{B}(x, r)\cap\real^{d+1}_+ ),\]
where $\mathbb{B}(x, r)$ denotes the open ball with center $x$ and radius $r$.

\begin{defn}
Suppose that $p\in[1,\infty)$. Then $W^{1,p}(\mathbb{R}^{d+1}_+,\mu^{\lambda}_{\alpha})$ is defined as the normed space of all measurable functions $f\in L^1_{\rm loc}(\mathbb{R}^{d+1}_+)$ such that their first-order distributional derivatives, denoted by $\nabla f$, belong to $L^1_{\rm loc}(\mathbb{R}^{d+1}_+)$, and
$$
\|f\|_{W^{1,p}(\mathbb{R}^{d+1}_+,\mu^{\lambda}_{\alpha})}:=\|f\|_{L^{p}(\mathbb{R}^{d+1}_+,\mu^{\lambda}_{\alpha})}+\|\nabla f\|_{L^{p}(\mathbb{R}^{d+1}_+,\mu^{\lambda}_{\alpha})}<+\infty.$$
\end{defn}

In order to formulate the dyadic norms of the related Besov-type spaces, let us recall the standard dyadic decompositions of $\real^d$ and $\real^{d+1}_+$, respectively (cf. \cite[Section 2]{KSW17}).

Denote by $\dyadic_d$ the collection of dyadic semi-open cubes in $\real^d$, i.e., the cubes of the form $Q := 2^{-k}\big((0,1]^d + m\big)$, where $k \in \mathbb Z$, the set of all integers, $m \in {\mathbb Z}^d$, and $\dyadic^+_d$ stands for the set of all cubes in $\dyadic_d$ which are contained in the upper half-space $\real^{d-1}\times(0,\infty)$. Write $\ell(Q)$ for the edge length of $Q \in \dyadic_d$, i.e., $2^{-k}$ in the preceding representation, and $\dyadic_{d,k}$ for the cubes $Q \in \dyadic_d$ such that $\ell(Q) = 2^{-k}$.

 Let $Q\in \dyadic_{d, 2^j}$ for some $j\in \N$. We say that $Q'$ in $\dyadic_d$ is a {\it selected neighbor} of $Q$, denoted by $Q' \asymp Q$, if $Q'\in \dyadic_{d, 2^j}\cup \dyadic_{d, 2^{j-1}}$ and $\overline{Q}\cap\overline{Q'} \neq \emptyset$.  Here we unify $\dyadic_{d, 2^{-1}}$ as $\dyadic_{d, 1}$, i.e., $\dyadic_{d, 2^{-1}}=\dyadic_{d, 1}$. Note that for every $Q\in\dyadic_d$, the number of its neighbors is uniformly bounded, and for every $Q\in \cup_{j\in \N}\dyadic_{d, 2^j}$, also, the number of its selected neighbors is uniformly bounded.

\begin{defn}\label{Besov-space-0}
Let $p\in [1, \infty)$ and $\lambda>0$. Then the Besov space $\mathcal B^{\lambda}_{p}(\mathbb{R}^{d})$ is defined as the normed space of all measurable functions $f\in L_{\rm loc}^{1}(\mathbb{R}^{d})$ such that
$$
\|f\|_{\mathcal B^{\lambda}_{p}(\mathbb{R}^{d})}:=\|f\|_{L^{p}(\mathbb{R}^{d})}+\|f\|_{\dot{\mathcal B}^{\lambda}_{p}(\mathbb{R}^{d})}<+\infty,$$
 where $$\|f\|^p_{\dot{\mathcal B}^{\lambda}_{p}(\mathbb{R}^{d})}:=
\sum_{j=0}^{\infty}(2^{j}+2)^{\lambda}\sum_{Q\in\dyadic_{d,2^{j}}}m_{d}(Q)\sum_{Q'\asymp Q}|f_{Q}-f_{Q'}|^p.$$
\end{defn}


Let us recall the standard $(1,1)$-Poincar\'e inequality satisfied by the functions that are locally $W^{1,1}$-regular in the upper half-space. If $Q$ is a cube in $\real^{d+1}_+$ such that $\dist(Q,\real^d\times\{0\}) > 0$ and $f \in W^{1,1}(Q)$, then there is a constant $C>0$ independent of $Q$ and $f$ such that
\begin{equation}\label{poincare}
  \vint_Q |f-f_{Q}|\, dm_{d+1}\leq C \ell(Q)\vint_Q |\nabla f| dm_{d+1},
\end{equation} where ``$\dist$'' means ``distance''.

To formulate the trace and extension operators, we first recall a Whitney decomposition of $\real^{d+1}_+$ related to the dyadic decomposition (cf. \cite{KSW17}).  For $Q \in \dyadic_{d,k}$ and $k \in {\mathbb Z}$, write $\W(Q) := Q \times (2^{-k},2^{-k+1}] \in \dyadic^+_{d+1,k}$. To simplify the notation in the sequel, we further define $\dyadic^1_d := \cup_{k \geq 1} \dyadic_{d,k}$.
Then $\{\W(Q) \,:\, Q \in \dyadic_d\}$ is a {\it Whitney decomposition} of $\real^{d}\times(0,\infty)$ with respect to the boundary $\real^d\times\{0\}$. For every $Q \in \dyadic^1_d$, define a smooth function $$\psi_Q \colon \real^{d+1}_{+}\to[0,1]$$ such that\begin{enumerate}
\item[(i)]
  $\Lip \psi_Q \lesssim 1/\ell(Q)$,
\item[(ii)]
   $\inf_{x \in \W(Q)} \psi_Q(x) > 0$ uniformly in $Q \in \dyadic^1_d$,
\item[(iii)]
    $\supp \psi_Q$ is contained in a $\ell(Q)/4$-neighborhood of $\W(Q)$, and
    \item[(iv)]
$\sum_{Q \in \dyadic^1_d} \psi_Q \equiv 1$ in $\bigcup_{Q \in \dyadic^1_d} \W(Q)$,
\end{enumerate} where ``$\supp$'' means ``support''.

 We say that $Q$ and $Q'$ in $\dyadic_d$ are {\it neighbors} and write $Q \sim Q'$ if $\frac12 \leq \ell(Q)/\ell(Q') \leq 2$ and $\overline{Q}\cap\overline{Q'} \neq \emptyset$.
We remark that the sum $\sum_{Q \in \dyadic^1_d}\psi_Q$ is locally finite -- more precisely, it follows from the definition that
\begin{equation}\label{eq:bump-support}
  \supp \psi_Q \cap \supp \psi_{Q'} \neq \emptyset \quad \text{if and only if} \quad Q \sim Q'.
\end{equation}

\begin{defn}[Trace]\label{trace-defn}
Suppose that $f\in L^1_{\rm loc}(\real^{d+1}_+)$ and  $k\in\mathbb{N}$. Define the function $\mathscr{T}_{k}f:\mathbb{R}^{d}\rightarrow \real$ by
\begin{equation*}
\mathscr{T}_{k}f:=\sum_{Q\in\dyadic_{d,k}}\left(\vint_{\mathscr{N}(Q)}f\, dm_{d+1}\right)\chi_{Q},
\end{equation*}
where $\mathscr{N}(Q)=\frac{5}{4}\mathscr{W}(Q):=\{y\in \mathbb{R}^{d+1}_{+}: \dist(y,\mathscr{W}(Q))<\frac{1}{4}\ell(Q)\}$, and define the trace function $\mathscr T f$ by setting
\begin{equation*}\label{trace}
\mathscr T f=\lim_{k\rightarrow \infty} \mathscr T_k f,
\end{equation*}
if the limit exists $m_d$-a.e.  in $\real^d$.
\end{defn}

Before going to the definition of the extension operator,  we give one more notation. For any $Q\in \dyadic_d$, let $\tdQ$ be the unique cube such that there is $j\in \N$ satisfying $\tdQ\in \dyadic_{d, 2^j}$ and $Q\subset \tdQ$ if $Q\in \bigcup_{k=2^j}^{2^{j+1}-1}\dyadic_{d, k}$.

\begin{defn}[Extension] \label{extension-defn}
Suppose that $f \in  L^1_{\rm loc}(\real^d)$. Then the {\it selected Whitney extension} $\mathscr E f \colon \real^{d+1}_+\to\real$ is defined by
\[
\mathscr E f(x)= \sum_{Q\in \dyadic^1_d}\left(\vint_{\tdQ} f\, dm_d\right) \psi_Q(x).\]
\end{defn}
It is easy to see that the extension operator $\mathscr E: L^1_{\rm loc}(\real^d)\rightarrow C^\infty(\real^{d+1}_+)$ is   linear.

\section{Trace operators and trace spaces}\label{sec-3}
In this section, we shall give the proofs of Example \ref{example}, Theorem \ref{Thm-1.2} and Theorem \ref{thm-1.3}.

%
\subsection*{Proof of Example \ref{example}}
Recall that if $(p, \lambda)\in \Pi$, but $(p, \lambda)\notin \Gamma$, then $\lambda\leq p-1$ if $p>1$, and $\lambda<0$ if $p=1$.
Let $\varphi$ be a compactly supported smooth function on $\real^d$ with $\varphi(x')=1$ for $x'\in [-1, 1]^d$ and $\supp \varphi\subset [-2, 2]^d$. Then we define the function $u$ as follows.

For $(x', t)\in \real^{d+1}_+$, let
\[u(x', t)=\varphi(x')\cdot \max\{ v(t), 0\},\]
where
$$v(t)=\int_{t}^1 \frac{1}{t\log(e/t)\Big(1+\log^\beta\big(\log(e/t)\big)\Big)}\, dt.$$
Here we choose $\beta$ such that $\beta=0$ if $p=1$ and $0<\beta<1<\beta p$ if $p>1$.
Obviously, for any $x'\in [-1, 1]^d$, $\Trace u(x')=\infty$. This shows that $\mathscr T u$ does not exist.

Next, we demonstrate that $u\in \Sobolevp$.
For any $(x', t)\notin [-2, 2]^d\times (0, 1]$, we have that
$$|u(x', t)|=|\nabla u(x', t)|=0.$$
For any $(x', t)\in [-2, 2]^d\times (0, 1]$, there is a constant $C>0$ such that
$$|u(x', t)| \leq C |v(t)| \leq C \int_t^1 \frac{1}{t\log(e/t)}\, dt=C \log\big(\log(e/t)\big)$$
and
\begin{align*}|\nabla u(x', t)|&\leq C|v(t)|+ \frac{C}{t\log(e/t)\log^\beta\big(\log(e/t)\big)}\\
&\leq  C \log\big(\log(e/t)\big)+ \frac{C}{t\log(e/t)\log^\beta\big(\log(e/t)\big)}.
\end{align*}
For $p=1$, $\lambda<0$ and $\beta=0$,
the above facts guarantee that
\[\|u\|_{L^1(\real^{d+1}_+,\mual)} \lesssim 4^d\int_{0}^{1} \log^{\lambda}(4/t)\log\big(\log(e/t)\big)\, dt<\infty\]
and
\begin{align*}
\|\nabla u\|_{L^1(\real^{d+1}_+,\mual)} &\lesssim \|u\|_{L^1(\real^{d+1}_+,\mual)} +4^d\int_0^1 \frac{1}{t\log(e/t)\log^{-\lambda}(4/t)}\, dt\\
&\lesssim \|u\|_{L(\real^{d+1}_+,\mual)} +\int_0^1 \frac{1}{t\log^{1-\lambda}(4/t)}\, dt<\infty.
\end{align*}
For $p>1$, $\lambda\leq p-1$ and $1/p <\beta<1$, similarly, we obtain that
\[\|u\|^p_{L^p(\real^{d+1}_+,\mual)} \leq 4^d\int_{0}^{1} t^{p-1}\log^{\lambda}(4/t)\log^p\big(\log(e/t)\big)\, dt<\infty\]
and
\begin{align*}
\|\nabla u\|^p_{L^p(\real^{d+1}_+,\mual)} &\lesssim \|u\|^p_{L^p(\real^{d+1}_+,\mual)} +4^d\int_0^1 \frac{1}{t\log^p(e/t)\log^{-\lambda}(4/t) \log^{\beta p}\big(\log(e/t)\big)}\, dt\\
&\lesssim \|u\|^p_{L^p(\real^{d+1}_+,\mual)} +\int_0^1 \frac{1}{t\log^{p-\lambda}(4/t) \log^{\beta p}\big(\log(4/t\big))}\, dt<\infty.
\end{align*}
 Hence $u\in \Sobolevp$. These imply that the example is true.
\qed

%

\subsection*{Proof of Theorem \ref{Thm-1.2}}

{{ \bf Proof of (i)}.} From  Example \ref{example}, it suffices to show that if $(p, \lambda)\in \Gamma$, then the trace function $\mathscr T u$ belongs to $L^p(\real^d)$ for every $u\in W^{1, p}(\real^{d+1}_+, \mu_{p-1}^\lambda)$.

Let $f\in \Sobolevp$ with $\alpha=p-1$ and $(p, \lambda)\in \Gamma$. The first thing is to verify the existence of the limit in Definition \ref{trace-defn}. To reach this goal, it suffices to show that the function
\[f^*:=\sum_{k\geq 0} |\Trace_{k+1} f-\Trace_k f|+|\Trace_0 f|\]
belongs to $L^p(\real^d)$ because $f^*\in L^p(\real^d)$ implies that $f^*(x)<\infty$ for $m_d$-almost every $x\in \real^d$.

For any $P\in \dyadic_{d, 0},$ notice that $m_{d+1}(\mathscr N(P))\approx 1$ and $\omegaal\approx 1$ in $\mathscr N(P)$, it follows from the Minkowski inequality that
\begin{align}
\left(\int_{\real^d}|f^*|^{p}\, dm_d\right)^{1/p}&\lesssim \left(\int_{\real^d} \left(\sum_{k\geq 0}  |\Trace_{k+1} f-\Trace_k f|\right)^p\, dm_d\right)^{1/p}+\left(\sum_{P\in\dyadic_{d, 0}}\int_{\mathscr N(P)} |f|^p\, dm_{d+1}\right)^{1/p}\notag\\
& \lesssim \sum_{k\geq 0}\left(\int_{ \real^d} |\Trace_{k+1} f-\Trace_k f|^p\, dm_d\right)^{1/p} + \left(\sum_{P\in\dyadic_{d, 0}}\int_{\mathscr N(P)} |f|^p\, d\mual\right)^{1/p}\label{integral-P}.
\end{align}

For any $x\in \real^d$, let $Q_k^x$ be the unique cube in $\dyadic_{d, k}$ containing $x$. By the definition, we know that the intersection  $\mathscr N(Q_k^x)\bigcap\mathscr N(Q_{k+1}^x)$ contains a cube $\hat Q$ with edge length comparable to $2^{-k}$. Since $\omegaal(y)\approx 2^{-k\alpha}(2+k)^{\lambda}$ for all $y\in \mathscr N(Q_k^x)$ and $\mual(\mathscr N(Q_k^x))\approx 2^{-k\alpha} (2+k)^{\lambda} m_{d+1}(\mathscr N(Q_k^x))$, it follows from the Poincar\'e inequality \eqref{poincare} that
\begin{align*}
|\Trace_{k+1} f(x)-\Trace_k f(x)|&=\bigg|\vint_{\mathscr N(Q_k^x)} f\, dm_{d+1}-\vint_{\mathscr N(Q_{k+1}^x)} f\, dm_{d+1}\bigg|\\
&\leq \bigg|\vint_{\mathscr N(Q_k^x)} f\, dm_{d+1}-\vint_{\hat Q} f\, dm_{d+1}\bigg|+\bigg|\vint_{\hat Q} f\, dm_{d+1}-\vint_{\mathscr N(Q_{k+1}^x)} f\, dm_{d+1}\bigg|\\
&\lesssim \vint_{\mathscr N(Q_k^x)}|f-f_{\mathscr N(Q_k^x)}|\, dm_{d+1} +\vint_{\mathscr N(Q_{k+1}^x)}|f-f_{\mathscr N(Q_{k+1}^x) }|\, dm_{d+1}\\
&\lesssim 2^{-k}\vint_{\mathscr N(Q_k^x)}|\nabla f|\, dm_{d+1} + 2^{-k}\vint_{\mathscr N(Q_{k+1}^x)}|\nabla f|\, dm_{d+1}\\
&\approx 2^{-k}\vint_{\mathscr N(Q_k^x)}|\nabla f|\, d\mual + 2^{-k}\vint_{\mathscr N(Q_{k+1}^x)}|\nabla f|\, d\mual.
\end{align*}
Applying the H\"{o}lder inequality, we arrive at the estimate
\begin{equation*}\label{estimate-T}
|\Trace_{k+1} f(x)-\Trace_k f(x)|\lesssim 2^{-k} \left(\vint_{\mathscr N(Q_k^x)}|\nabla f|^p\, d\mual\right)^{1/p}+2^{-k} \left(\vint_{\mathscr N(Q_{k+1}^x)}|\nabla f|^p\, d\mual\right)^{1/p}.
\end{equation*}
Hence
\begin{align}
\int_{\real^d}  |\Trace_{k+1} f-\Trace_k f|^p\, dm_d&=\sum_{Q\in \dyadic_{d, k}} \int_Q|\Trace_{k+1} f-\Trace_k f|^p\, dm_d(x)\notag\\
&\lesssim \sum_{Q\in \dyadic_{d, k}} m_d(Q)\bigg(2^{-kp} \vint_{\mathscr N(Q)}|\nabla f|^p\, d\mual \bigg.\notag\\
&\bigg.\quad\quad\quad\quad\quad\quad\quad\quad\quad+\sum_{\substack{Q'\in \dyadic_{d, {k+1}}\\ Q'\subset Q} }2^{-kp} \vint_{\mathscr N(Q')}|\nabla f|^p\, d\mual \bigg)\notag\\
&\lesssim 2^{-k(d+p)} \sum_{Q\in \dyadic_{d, k}\cup\dyadic_{d, k+1}} \vint_{\mathscr N(Q)} |\nabla f|^p\, d\mual\notag\\
&\approx (2+k)^{-\lambda}\sum_{Q\in \dyadic_{d, k}\cup\dyadic_{d, k+1}} \int_{\mathscr N(Q)} |\nabla f|^p\, d\mual,\label{estimate-T_k-T}
\end{align}
since $\alpha=p-1$ implies that $\mual(\mathscr N(Q))\approx 2^{-k(d+p)} (2+k)^{\lambda}$ for all $Q\in \dyadic_{d, k}\cup\dyadic_{d, k+1}$.

Plugging this into \eqref{integral-P}, we obtain that
\begin{align*}
\|f^*\|_{L^p(\real^d)} \lesssim \sum_{k\geq 0} (2+k)^{-\lambda/p}\left(\sum_{Q\in \dyadic_{d, k}\cup\dyadic_{d, k+1}} \int_{\mathscr N(Q)} |\nabla f|^p\, d\mual\right)^{1/p} +\|f\|_{L^p(\real^{d+1}_{+},\mual)}.
\end{align*}

If $p=1$ and $\lambda\geq 0$, it is obvious that
\[\|f^*\|_{L^1(\real^d)} \lesssim\sum_{k\geq 0}\sum_{Q\in \dyadic_{d, k}\cup\dyadic_{d, k+1}} \int_{\mathscr N(Q)} |\nabla f|\, d\mual +\|f\|_{L^1(\real^{d+1}_{+},\mual)} \lesssim \|f\|_{W^{1,1}(\real^{d+1}_+, \mual)}.\]

If $p>1$ and $\lambda>p-1$, since the sum of $(2+k)^{-\lambda/(p-1)}$ converges, it follows from the H\"{o}lder inequality that
\begin{align*}
\|f^*\|_{L^p(\real^d)}& \lesssim \|f\|_{L^p(\real^{d+1}_{+},\mual)}+\left(\sum_{k\geq 0} \sum_{Q\in \dyadic_{d, k}\cup\dyadic_{d, k+1}} \int_{\mathscr N(Q)} |\nabla f|^p\, d\mual\right)^{1/p}\lesssim \|f\|_{\Sobolevp}.
\end{align*}
Thus we obtain that the estimate
\begin{equation}\label{tmp-norm}
\|f^*\|_{L^p(\real^d)}\lesssim \|f\|_{\Sobolevpp}
\end{equation}
 holds for all $1\leq p<\infty$. Hence $f^*<\infty$ for $m_d$-almost every $x\in \real^d$, so the trace function $\Trace f$ exists.

 Since $|\mathscr Tf|\leq |f^*|$ a.e. in $\real^d$, it follows from \eqref{tmp-norm} that
 \begin{equation}\label{add-7-1}
 \|\mathscr Tf\|_{L^p(\real^d)}\lesssim \|f\|_{\Sobolevpp}<\infty.
 \end{equation}
 Hence the trace function $\mathscr T u$ belongs to $L^p(\real^d)$.

{{\bf Proof of (ii)}.}   For every $(p, \lambda)\in \Gamma$, it follows from the   proof of (i) that  the trace function $\mathscr T u$ belongs to $L^p(\real^d)$ for every $u\in W^{1, p}(\real^{d+1}_+, \mu_{p-1}^\lambda)$ and that the norm estimate \eqref{add-7-1} holds for every $f\in W^{1, p}(\real^{d+1}_+, \mu_{p-1}^\lambda)$. These guarantee the boundedness of the trace operator $\mathscr T$. Since the linearity of the trace operator $\Trace$ is obvious from the definition, the proof is complete.
\qed

\subsection*{Proof of Theorem \ref{thm-1.3}}

Let  $f\in W^{1,p}(\mathbb{R}^{d+1}_{+},\mu_{p-1}^{\lambda})$ for $(p, \lambda)\in \Gamma$ (i.e., $\alpha=p-1$). It follows from Theorem \ref{Thm-1.2} that the trace function $\Trace f$ exists and $$\|\mathscr Tf\|_{L^p(\real^d)}\lesssim \|f\|_{\Sobolevp}.$$

The remaining task is to estimate the $\dot {\mathcal B}^{\gamma}_{p}(\mathbb{R}^{d})$-energy of $\mathscr{T}f=:{\tilde f}$.  Let $P\in \dyadic_{d, 2^k}$. Then
$$m_d(P)\sum_{Q\asymp P} |{\tilde f}_Q-{\tilde f}_P|^p \leq \sum_{\substack{Q\asymp P\\ Q\in \dyadic_{d, 2^k}}} \int_P |{\tilde f}-{\tilde f}_Q|^p\, dm_d+\sum_{\substack{Q\asymp P\\ Q\in \dyadic_{d, 2^{k-1}}}} \int_P |{\tilde f}-{\tilde f}_Q|^p\, dm_d=:H^1_P+H^2_P,$$
and so,
\begin{align*}
\|\mathscr Tf\|^{p}_{\dot {\mathcal B}^{\gamma}_{p}(\mathbb{R}^{d})}=\|\tilde f\|^{  p}_{\dot {\mathcal B}^{\gamma}_{p}(\mathbb{R}^{d})}&=\sum_{k=0}^{\infty}(2^{k}+2)^{\gamma}\sum_{P\in\dyadic_{d,2^{k}}}m_{d}(P)\sum_{Q\asymp P}|{\tilde f}_{Q}-{\tilde f}_{P}|^p\\
&\leq \sum_{k=0}^{\infty}(2^{k}+2)^{\gamma}\sum_{P\in\dyadic_{d,2^{k}}}(H^1_P+H^2_P)\\
&=:H^1+H^2,
\end{align*} where
$$H^i=\sum_{k=0}^{\infty}(2^{k}+2)^{\gamma}\sum_{P\in\dyadic_{d,2^{k}}}H^i_P\ \ \ \text{for }\ i=1, 2.$$

Towards the estimate of $H^1$, notice that
\begin{equation*}\label{measure-relation}
m_d(P)\approx m_d(Q) \approx 2^{2^kp}(2^k+2)^{-\lambda}\mu_\alpha^\lambda(\mathscr N(P))\approx 2^{2^kp}(2^k+2)^{-\lambda}\mu_\alpha^\lambda(\mathscr N(Q))
\end{equation*}
for any $P, Q\in \dyadic_{d, 2^k}$ with $P\asymp Q$.
Hence
\begin{align*}
H^1_P&\leq \sum_{\substack{Q\asymp P\\ Q\in \dyadic_{d, 2^k}}}\left( \int_P|{\tilde f}-f_{\mathscr{N}(P)}|^p\, dm_d+m_d(P)|f_{\mathscr{N}(P)}- f_{\mathscr{N}(Q)}|^p+\int_Q|{\tilde f}-f_{\mathscr{N}(Q)}|^p\, dm_d\right)\\
&\lesssim \sum_{\substack{Q\asymp P\\ Q\in \dyadic_{d, 2^k}}} \int_{Q} |{\tilde f}(x)-\mathscr T_{2^k} f(x)|^p\, dm_d(x)+\sum_{\substack{Q\asymp P\\ Q\in \dyadic_{d, 2^k}}}m_d(P)|f_{\mathscr{N}(P)}-f_{\mathscr{N}(Q)}|^p\\
&\approx \sum_{\substack{Q\asymp P\\ Q\in \dyadic_{d, 2^k}}} \int_{Q} |{\tilde f}(x)-\mathscr T_{2^k} f(x)|^p\, dm_d(x)+ 2^{2^kp}(2^k+2)^{-\lambda}\sum_{\substack{Q\asymp P\\ Q\in \dyadic_{d, 2^k}}}\mu_\alpha^\lambda(\mathscr{N}(P))|f_{\mathscr{N}(P)}-f_{\mathscr{N}(Q)}|^p.
\end{align*}

Using  the Poincar\'{e} inequality \eqref{poincare} and the fact that $\#\{Q: Q\asymp P\}$ are uniformly bounded, where ``$\#$'' means ``cardinality'', we get
\begin{align*}
\sum_{P\in\dyadic_{d, 2^k}} H^1_P&\lesssim \int_{\real^d} |{\tilde f}(x)-\mathscr T_{2^k}f(x)|^{ p}\, dm_d(x)+ (2^k+2)^{-\lambda}\int_{\bigcup_{2^{-2^k-1}\leq\ell(Q)\leq 2^{-2^k+1}}\mathscr N(Q)} |\nabla f|^p\, d\mu_\alpha^\lambda\\
&=: I_k+ (2^k+2)^{-\lambda} I'_k.
\end{align*}

Since the domains of integration in $I'_k$'s have bounded overlap, we obtain that
$$\sum_{k\geq 0} I'_k\lesssim \|f\|^{  p}_{W^{1,p}(\real^{d+1}_+, \mu_\alpha^\lambda)}.$$

Next, we are going to estimate the term $I_k$. Since it follows from the triangle inequality and the Minkowski inequality that
\begin{align*}
(I_k)^{1/p}&\leq \left(\int_{\real^d}\left(\sum_{n\geq 2^k}|\mathscr{T}_{n+1}f-\mathscr{T}_{n}f| \right)^p\, dm_d\right)^{1/p}\leq \sum_{n\geq 2^{k}}\left(\int_{\mathbb{R}^{d}}|\mathscr{T}_{n+1}f-\mathscr{T}_{n}f|^pdm_{d}\right)^{1/p},
\end{align*}
we know from the estimate \eqref{estimate-T_k-T} that
\[\int_{\real^d}  |\Trace_{n+1} f-\Trace_n f|^p\, dm_d\lesssim  (2+n)^{-\lambda}\sum_{Q\in \dyadic_{d, n}\cup\dyadic_{d, n+1}} \int_{\mathscr N(Q)} |\nabla f|^p\, d\mual.\]

For the case when $p>1, \alpha=p-1$ and $0<\gamma<\lambda-(p-1)$, it follows from the H\"{o}lder inequality that
\begin{align*}
I_k&\lesssim\left(\sum_{n\geq 2^k} (n+2)^{-(\lambda-\gamma)/p}\left((n+2)^{-\gamma}\int_{\cup_{2^{-n-2}\leq\ell(Q)\leq 2^{-n+1}}\mathscr N(Q)}|\nabla f|^p\, d\mu_{\alpha}^{\lambda}\right)^{1/p} \right)^p\\
& \lesssim \sum_{n\geq 2^k}(n+2)^{-\gamma}\int_{\cup_{2^{-n-2}\leq\ell(Q)\leq 2^{-n+1}}\mathscr N(Q)}|\nabla f|^p\, d\mu_{\alpha}^{\lambda},
\end{align*}
since the fact $\lambda-\gamma>p-1$ implies that the series $\sum_{n=1}^{\infty}(n+2)^{-\frac{\lambda-\gamma}{p-1}}$ is convergent.

For the case when $p=1$ and $0<\gamma\leq\lambda$, it is obvious that
$$I_k\lesssim \sum_{n\geq 2^k}(n+2)^{-\gamma}\int_{\cup_{2^{-n-2}\leq\ell(Q)\leq 2^{-n+1}}\mathscr N(Q)}|\nabla f|\, d\mu_{0}^{\lambda}.$$
Hence the Fubini theorem and the above estimates guarantee that for $p\geq 1$,
\[\begin{split}
\sum_{k\geq0}(2^{k}+2)^{\lambda}I_{k}
\lesssim&\sum_{k\geq0}(2^{k}+2)^{\gamma}\sum_{n\geq2^{k}}(n+2)^{-\gamma}\int_{\cup_{2^{-n-2}\leq\ell(Q)\leq 2^{-n+1}}\mathscr N(Q)}|\nabla f|^p\, d\mu_{\alpha}^{\lambda}\\
=&\sum_{n\geq1}(n+2)^{-\gamma}\int_{\cup_{2^{-n-2}\leq\ell(Q)\leq 2^{-n+1}}\mathscr N(Q)}|\nabla f|^p\, d\mu_{\alpha}^{\lambda}\sum_{0\leq2^{k}\leq n}(2^{k}+2)^{\gamma}\\
\lesssim&\sum_{n\geq0}\int_{\cup_{2^{-n-2}\leq\ell(Q)\leq 2^{-n+1}}\mathscr N(Q)}|\nabla f|^p\, d\mu_{\alpha}^{\lambda}\\
\lesssim&\|f\|^{ p}_{W^{1, p}(\mathbb{R}^{d+1}_{+},\mu_{\alpha}^{\lambda})}.
\end{split}\]
Thus the estimate of $H^1$ is given by
\begin{align*}
H^1&= \sum_{k=0}^{\infty} (2^k+2)^\gamma\sum_{P\in \dyadic_{d, 2^k}} H^1_P\lesssim \sum_{k\geq 0} (2^k+2)^\gamma I_k+ \sum_{k\geq 0} I'_k\lesssim \|f\|^{  p}_{W^{1, p}(\mathbb{R}^{d+1}_{+},\mu_{\alpha}^{\lambda})}.
\end{align*}

For the estimate of $H^2$, again, it follows from the Fubini theorem that
\begin{align*}
H^2&=\sum_{k\geq 0}  (2^k+2)^\gamma\sum_{P\in \dyadic_{d, 2^k}} \sum_{\substack{Q\asymp P\\ Q\in \dyadic_{d, 2^{k-1}}}} \int_P |{\tilde f}-{\tilde f}_Q|^p\, dm_d\\
&= \sum_{k\geq -1} (2^{k+1}+2)^\gamma \sum_{Q\in \dyadic_{d, 2^k} } \sum_{\substack{P\asymp Q\\ P\in \dyadic_{d, 2^{k+1}}}}  \int_P |{\tilde f}-{\tilde f}_Q|^p\, dm_d\\
&\leq \sum_{k\geq -1} (2^{k+1}+2)^\gamma \sum_{Q\in \dyadic_{d, 2^k} } \sum_{\substack{Q'\asymp Q\\ Q'\in \dyadic_{d, 2^{k}}}}  \int_{Q'} |{\tilde f}-{\tilde f}_Q|^p\, dm_d,
\end{align*}
where in the last inequality, the fact that for any $Q\in \dyadic_{d, 2^k}$,
\[\bigcup_{\substack{P\asymp Q\\ P\in \dyadic_{d, 2^{k+1}}}} P\subset \bigcup_{\substack{Q'\asymp Q\\ Q'\in \dyadic_{d, 2^{k}}}} Q' \] is applied.

By the reflexivity of the relation $Q'\asymp Q$ when $Q', Q\in \dyadic_{d, 2^k}$, we obtain that
\begin{align*}
H^2&\leq 3^\gamma\sum_{Q\in \dyadic_{d, 2^{-1}} } \sum_{\substack{Q'\asymp Q\\ Q'\in \dyadic_{d, 2^{-1}}}}  \int_{Q'} |{\tilde f}-{\tilde f}_Q|^p\, dm_d +H^1\\
&\lesssim \sum_{Q\in \dyadic_{d, 1}} \int_Q |{\tilde f}|^p\, dm_d+H^1\leq \|\mathscr T f\|^{ p}_{L^p(\real^d)}+H^1\lesssim \|f\|^{ p}_{W^{1, p}(\mathbb{R}^{d+1}_{+},\mu_{\alpha}^{\lambda})}.
\end{align*}

By combining the estimates of $H^1$ and $H^2$, we obtain that
$$\|\mathscr Tf\|_{\dot{\mathcal B}^{ \gamma}_{p}(\real^d)}\lesssim\|f\|_{W^{1, p}(\mathbb{R}^{d+1}_{+},\mu_{\alpha}^{\lambda})}=\|f\|_{\Sobolevpp},$$
which proves the theorem.\qed

\section{Extension operators}\label{sec-4}
The purpose of this section is to prove Theorem \ref{thm-1.4}.

Let $f\in \mathcal B^{\lambda}_{p}(\mathbb{R}^{d})$ and $\alpha=p-1$. For convenience, we rewrite $\mathscr Ef$ (see Definition \ref{extension-defn}) as follows:
\begin{equation*}\label{extension-eq}
\mathscr{E}f(x)=\sum_{k\geq0}\sum_{j=2^{k}}^{2^{k+1}-1}\sum_{Q\in \dyadic_{d, j}}\left(\vint_{\tdQ} f\, dm_{d}\right)\psi_{Q}(x).
\end{equation*}
Here we recall that $\tdQ \in \dyadic_{d, 2^k}$ is the unique cube with $Q\subset\tdQ$ for any $Q\in \bigcup_{j=2^k}^{2^{k+1}-1}\dyadic_{d, j}$.

The first task is to estimate the $L^{p}(\mathbb{R}^{d+1}_{+},\mu_{\alpha}^{\lambda})$-norm of $\mathscr{E}f$. It follows directly that
\begin{align*}
\|\mathscr{E}f\|_{L^{p}(\mathbb{R}^{d+1}_{+},\mu_{\alpha}^{\lambda})}^{p}&=\int_{\mathbb{R}^{d+1}_{+}}|\mathscr{E}f|^{p}d\mu_{\alpha}^{\lambda}=\sum_{n\geq1}\sum_{P\in\dyadic_{d,n}}\int_{\supp\psi_{P}}|\mathscr{E}f|^{p}d\mu_{\alpha}^{\lambda}\\
&=\sum_{P\in \dyadic_{d, 1}} \int_{\supp\psi_{P}}|\mathscr{E}f|^{p}d\mu_{\alpha}^{\lambda}+\sum_{n\geq2}\sum_{P\in\dyadic_{d,n}}\int_{\supp\psi_{P}}|\mathscr{E}f|^{p}d\mu_{\alpha}^{\lambda}\\
&=:  I_1+I_2.
\end{align*}

To estimate $I_1$, notice that for any $P\in\dyadic_{d,1}$ and each $x\in \supp\psi_{P}$, we have $\psi_{Q}(x)\neq0$ only for $Q\in\dyadic_{d,j}$ with $j=1,2$ and $Q\sim P$. Since $\mual(\supp \psi_p)\approx 1$ and $|\psi_Q|\leq 1$ for any $P\in \dyadic_{d, 1}$ and $Q\in \dyadic^1_d$, we obtain
\begin{align*}
I_1=\sum_{P\in\dyadic_{d,1}}\int_{\supp\psi_{P}}|\mathscr{E}f|^{p}d\mu_{\alpha}^{\lambda}
=&\sum_{P\in\dyadic_{d,1}}\int_{\supp\psi_{P}}\Big|\sum_{\substack{Q\in\dyadic_{d, 1}\cup\dyadic_{d, 2}\\ Q\sim P}}f_{\tdQ}\,\psi_{Q}(x)\Big|^{p}d\mu_{\alpha}^{\lambda}\\
\lesssim&\sum_{Q\in \dyadic_{d, 1}\cup\dyadic_{d, 2}}\int_{Q}|f|^{p}dm_{d}\lesssim\|f\|_{L^{p}(\mathbb{R}^{d})}^{p},
\end{align*}
where in the second last inequality, the fact that $\tdQ=Q$ for any $Q\in \dyadic_{d, 1}\cup\dyadic_{d, 2}$ is used.

Towards the estimate of $I_2$, since
\[\bigcup_{n\geq 2}\dyadic_{d, n}=\bigcup_{k\geq1}\bigcup_{2^k\leq j\leq 2^{k+1}-1} \dyadic_{d, j},\]
we have
\begin{align*}
I_2&=\sum_{k\geq 1}\sum_{j=2^k}^{2^{k+1}-1}\sum_{P\in \dyadic_{d, j}} \int_{\supp \psi_P} |\mathscr E f|^p\, d\mual\\
&= \sum_{k\geq 1}\sum_{P\in \dyadic_{d, 2^k}}\int_{\supp \psi_P} |\mathscr E f|^p\, d\mual +\sum_{k\geq 1}\sum_{P\in \dyadic_{d, 2^{k+1}-1}}\int_{\supp \psi_P} |\mathscr E f|^p\, d\mual\\
&\quad\quad\quad \quad\quad+ \sum_{k\geq 2}\sum_{j=2^k+1}^{2^{k+1}-2}\sum_{P\in \dyadic_{d, j}} \int_{\supp \psi_P} |\mathscr E f|^p\, d\mual\\
&=:I_2^A+I_2^B+I_2^C.
\end{align*}

For the estimate of $I_2^A$, recall the relation \eqref{eq:bump-support} and notice that for $P\in \dyadic_{d, 2^k}$, if $Q\sim P$, then
$$Q\in\dyadic_{d, 2^k-1}\cup\dyadic_{d, 2^k}\cup\dyadic_{d, 2^k+1}.$$
By the definition of $\tdQ$, we have
\begin{equation*}
\left\{\begin{array}{cl}
\tdQ\in \dyadic_{d, 2^k}\,\,&\mbox{if}\;\; Q\in \dyadic_{d, 2^k}\cup\dyadic_{d, 2^k+1},\\
\tdQ\in \dyadic_{d, 2^{k-1}}\,\,&\mbox{if}\;\; Q\in \dyadic_{d, 2^k-1}.
\end{array}\right.
\end{equation*}
Then it follows from the uniformly boundedness of $\#\{Q: Q\sim P\}$ that
\begin{align*}
I_2^A &\lesssim\sum_{k\geq 1}\sum_{P\in \dyadic_{d, 2^k}}\mual(\W(P))\sum_{Q\sim P}\vint_{\tdQ} |f|^{p}\, dm_{d}\\
&=\sum_{k\geq1}\sum_{P\in\dyadic_{d,2^{k}}}\mu_{\alpha}^{\lambda}(\mathscr{W}(P))\sum_{\substack{Q\sim P\\ Q\in \dyadic_{d, 2^k}\cup\dyadic_{d, 2^k+1}}}\vint_{\tdQ} |f|^{p}\, dm_{d}\\
&\quad\quad\quad\quad+\sum_{k\geq1}\sum_{P\in\dyadic_{d,2^{k}}}\mu_{\alpha}^{\lambda}(\mathscr{W}(P))\sum_{\substack{Q\sim P\\ Q\in \dyadic_{d, 2^k-1}}}\vint_{\tdQ} |f|^{p}\, dm_{d}\\
&\lesssim\sum_{k\geq1}2^{-2^{k}p}(2^{k}+2)^{\lambda}\sum_{Q^{\prime}\in\dyadic_{d,2^{k}}\cup\dyadic_{d,2^{k-1}}}\int_{Q^{\prime}}|f|^{p}dm_{d}\\
&\lesssim\sum_{k\geq1}2^{-2^{k}p}(2^{k}+2)^{\lambda}\int_{\mathbb{R}^{d}}|f|^{p}dm_{d}
\lesssim\|f\|_{L^{p}(\mathbb{R}^{d})}^{p},
\end{align*}
since $\mual(\W(P))\approx 2^{-2^k(d+p)} (2^{k}+2)^{\lambda}$ for $P\in \dyadic_{d, 2^k}$ and $\sum_{k\geq1}2^{-2^{k}p}(2^{k}+2)^{\lambda}$ is convergent.

The similar reasoning as in the estimate of $I_2^A$ ensures that
\[I_2^B\lesssim\sum_{k\geq 1} 2^{-2^{k+1}p}(2^{k+1}+2)^\lambda\sum_{Q'\in \dyadic_{d, 2^k}\cup\dyadic_{d, 2^{k+1}}} \int_{Q'}|f|^p\, dm_d  \lesssim \|f\|_{L^p(\real^d)}^p.\]

The estimate of $I_2^C$ is obtained by using the Fubini theorem as follows:
\begin{align*}
I_2^C&\lesssim \sum_{k\geq 2}\sum_{j=2^k+1}^{2^{k+1}-2}\sum_{P\in \dyadic_{d, j}}\mual(\W(P))\sum_{Q\sim P}\vint_{\tdQ} |f|^{p}\, dm_{d}\\
& = \sum_{k\geq 2} \sum_{Q'\in \dyadic_{d, 2^k}} 2^{ 2^kd}\int_{Q'} |f|^p\, dm_d\, \Bigg(\sum_{j=2^k+1}^{2^{k+1}-2}\sum_{\substack{P\in \dyadic_{d, j}, P\sim Q\\ \tdQ=Q'}} \mual(\W(P)) \Bigg)\\
&\approx \sum_{k\geq 2} \sum_{Q'\in \dyadic_{d, 2^k}} 2^{ 2^kd}\int_{Q'} |f|^p\, dm_d\, \Bigg(\sum_{j=2^k+1}^{2^{k+1}-2} 2^{-j(d+p)}(2+j)^\lambda\left(\frac{2^{-2^k}}{2^{-j}}\right)^d\Bigg)\\
&\lesssim\sum_{k\geq2}2^{-2^{k}p}(2^k+2)^\lambda\sum_{Q^{\prime}\in\dyadic_{d,2^{k}}}\int_{Q^{\prime}}|f|^{p}dm_{d}\lesssim\|f\|_{L^{p}(\mathbb{R}^{d})}^{p}.
\end{align*}

By combining the estimates of $I_1$, $I_2^A$, $I_2^B$ and $I_2^C$, we have
 \begin{equation}\label{extension-L-norm}
 \|\mathscr{E}f\|_{L^{p}(\mathbb{R}^{d+1}_{+},\mu_{\alpha}^{\lambda})}\lesssim\|f\|_{L^{p}(\mathbb{R}^{d})}.\end{equation}

The next task is to estimate the $L^{p}(\mathbb{R}^{d+1}_{+},\mual)$-norm of $\nabla(\mathscr{E}f)$. To reach this goal, we first divide $\mathbb{R}^{d+1}_{+}$ into two parts:
$$L^{p}(\mathbb{R}^{d+1}_{+},\mual)=X_{1}\cup X_{2},$$
where
$X_{1}=\bigcup_{k\geq1}\bigcup_{Q\in\dyadic_{d,k}}\mathscr{W}(P)$ and $X_{2}=\mathbb{R}^{d+1}_{+}\backslash X_{1}.$

If $x\in X_{2}$, we have $\psi_{Q}(x)\neq0$ only for $Q\in\dyadic_{d,1}$. So it follows from the Lipschitz continuity of the functions $\psi_Q$ that
$$
|\nabla(\mathscr{E}f)(x)|\leq|\Lip(\mathscr{E}f)(x)|\lesssim\sum_{Q\in\dyadic_{d,1}}|f_{Q}|\chi_{\supp\psi_{Q}}(x).
$$
Since $\mual(\supp\psi_{Q})\approx1$ for each $Q\in\dyadic_{d,1}$, the above estimate yields that
\begin{equation}\label{eq-add-}
\int_{X_{2}}|\nabla(\mathscr{E}f)|^{p}d\mu_{\alpha}^{\lambda}\lesssim\sum_{Q\in\dyadic_{d,1}}|f_{Q}|^{p}\mu_{\alpha}^{\lambda}(\supp\psi_{Q})\lesssim\|f\|_{L^{p}(\mathbb{R}^{d})}^{p}.
\end{equation}

If on the other hand $x\in X_{1}$, then there exist a $k\geq0$ and an unique cube $P\in\dyadic_{d,j}$ such that  $2^{k}\leq j<2^{k+1}$ and $x\in\mathscr{W}(P)$. So we can have
\begin{align*}
\int_{X_{1}}|\nabla(\mathscr{E}f)|^{p}d\mu_{\alpha}^{\lambda}&=\sum_{k\geq0}\sum_{j=2^{k}}^{2^{k+1}-1}\sum_{P\in\dyadic_{d,j}}\int_{\mathscr{W}(P)}|\nabla(\mathscr{E}f)|^{p}d\mu_{\alpha}^{\lambda}\\
& =\sum_{k\geq0} \sum_{P\in\dyadic_{d,2^k}}\int_{\mathscr{W}(P)}|\nabla(\mathscr{E}f)|^{p}d\mu_{\alpha}^{\lambda} +\sum_{k\geq1} \sum_{P\in\dyadic_{d,2^{k+1}-1}}\int_{\mathscr{W}(P)}|\nabla(\mathscr{E}f)|^{p}d\mu_{\alpha}^{\lambda} \\
&\quad\quad\quad\quad+\sum_{k\geq2}\sum_{j=2^k+1}^{2^{k+1}-2} \sum_{P\in\dyadic_{d,2^{k+1}-1}}\int_{\mathscr{W}(P)}|\nabla(\mathscr{E}f)|^{p}d\mu_{\alpha}^{\lambda}\\
& =: H^A+H^B+H^C.
\end{align*}

For the estimate of $H^A$, let $P\in\dyadic_{d,2^k}$ with $k\geq 0$. Then for each $x\in\mathscr{W}(P)$,
\begin{equation}\label{ext}
\mathscr{E}f(x)=\sum_{Q\sim P}\left(\vint_{\tdQ} f\, dm_{d}\right)\psi_{Q}(x).
\end{equation}
Again, it follows from the Lipschitz continuity of the functions $\psi_Q$ that
$$
|\nabla (\mathscr Ef)(x)|\leq |\Lip(\mathscr{E}f(x)-f_{P})|\lesssim\sum_{{Q\sim P}} \frac{1}{2^{-2^k}}|f_{\tdQ}-f_{P}|.
$$
Since for any $P\in \dyadic_{d, 2^k}$ and  $Q\sim P$, the cube $\tdQ$ is a selected neighbor of $P$, i.e., $\tdQ\asymp P$, we obtain
\begin{align*}
H^A&\lesssim \sum_{k\geq 0} \sum_{P\in\dyadic_{d, 2^k}} \mual(\W(P)) 2^{2^k p}\sum_{Q\sim P}|f_{\tdQ}-f_P|^p\\
&\approx \sum_{k\geq 0} \sum_{P\in\dyadic_{d, 2^k}} (2^k+2)^\lambda  m_d(P)\sum_{Q' \asymp P}|f_{Q'}-f_P|^p \\
&\leq \|f\|^{ p}_{\mathcal B^{\lambda}_{p}(\mathbb{R}^{d})}.
\end{align*}

For the estimate of $H^B$, let $P\in \dyadic_{d, 2^{k+1}-1}$. Then ${\mathcal S (P)} \in \dyadic_{d, 2^k}$ is the unique cube  with $P\subset {\mathcal S (P)}$. It follows from \eqref{ext} and the continuity of the functions $\psi_Q$ that
\[|\nabla(\mathscr Ef) (x)| \leq |\Lip(\mathscr{E}f(x)-f_{\mathcal S(P)})|\lesssim\sum_{{Q\sim P}} \frac{1}{2^{-2^{k+1}}}|f_{\tdQ}-f_{{\mathcal S (P)}}|.\]
Since for any $P\in \dyadic_{d, 2^{k+1}-1}$ and $Q\sim P$, the cube ${\mathcal S (P)}$ is a selected neighbor of
$\tdQ$, i.e., ${\mathcal S (P)}\asymp \tdQ$, it follows from the Fubini theorem that
\begin{align*}
H^B&\lesssim \sum_{k\geq 1} \sum_{P\in \dyadic_{d, 2^{k+1}-1}}\mual(\W(P)) 2^{2^{k+1}p} \sum_{Q\sim P} |f_{\tdQ}-f_{{\mathcal S (P)}}|\\
&\lesssim \sum_{k\geq 1} \sum_{j=2^{k+1}-2}^{2^k}\sum_{Q\in \dyadic_{d, j}} m_d(Q) (2^{k+1}+2)^\lambda\sum_{P'\asymp \tdQ} |f_{\tdQ}-f_{P'}|^p\\
&=\sum_{k\geq 1}(2^{k+1}+2)^\lambda \sum_{Q'\in \dyadic_{d, 2^{k}}\cup\dyadic_{d, 2^{k+1}}} \sum_{P'\asymp Q'} |f_{P'}-f_{Q'}|^p\Bigg(\sum_{j=2^{k+1}-2}^{2^k}\sum_{\substack{Q\in \dyadic_{d, j}\\  \tdQ=Q'}} m_d(Q)\Bigg)\\
&\lesssim \sum_{k\geq 1}(2^{k+1}+2)^\lambda \sum_{Q'\in \dyadic_{d, 2^{k}}\cup\dyadic_{d, 2^{k+1}}} m_d(Q')\sum_{P'\asymp Q'} |f_{P'}-f_{Q'}|^p\lesssim \|f\|^{  p}_{\mathcal B^{\lambda}_{p}(\mathbb{R}^{d})}.
\end{align*}

The left is to estimate $H^C$. For any $2^k+1\leq j\leq 2^{k+1}-2$ with $k\geq 2$, let
$$\dyadic_{d,j}=Y_{1,j}\cup Y_{2,j},$$
where $$
Y_{1,j}:=\{P:P\in\dyadic_{d,j}, \overline{P} \bigcap\overline{\mathbb{R}^{d}\backslash \tdP}=\emptyset\}
$$
and
$$
Y_{2,j}:=\{P:P\in\dyadic_{d,j}, \overline{P} \bigcap\overline{\mathbb{R}^{d}\backslash \tdP}\neq\emptyset\}.
$$

Let $x\in\mathscr{W}(P)$ for $P\in\dyadic_{d,j}$. If $P\in Y_{1,j}$, then the definition of $\mathscr{E}f$ implies that $\mathscr{E}f(x)\equiv f_{\tdP}$, and hence, $$\int_{\mathscr{W}(P)}|\nabla(\mathscr{E}f)|^{p}d\mu_{\alpha}^{\lambda}=0.$$
For the case when $P\in Y_{2,j}$, notice that for $Q\sim P$, the cube $\tdQ$ is a selected neighbor of $\tdP$, i.e., $\tdQ\asymp \tdP$.  It follows from \eqref{ext} that
\[|\nabla(\mathscr Ef) (x)| \leq |\Lip(\mathscr{E}f(x)-f_{\mathcal S(P)})|\lesssim\sum_{{Q\sim P}} \frac{1}{2^{-j}}|f_{\tdQ}-f_{{\mathcal S (P)}}|\leq \frac{1}{2^{-j}}\sum_{Q\asymp \tdP} |f_Q-f_{\tdP}| .\]
Hence, for any $P\in Y_{2, j}$, we have the estimate
\begin{align*}
\int_{\mathscr{W}(P)}|\nabla(\mathscr{E}f)|^{p}d\mu_{\alpha}^{\lambda}
\lesssim&\mu_{\alpha}^{\lambda}(\mathscr{W}(P))\bigg(\frac{1}{2^{-j}}\sum_{Q\asymp \tdP} |f_Q-f_{\tdP}|\bigg)^{p}\\
\lesssim&2^{-jd}(j+2)^{\lambda}\sum_{Q\asymp \tdP} |f_Q-f_{\tdP}|^{p}.
\end{align*}
It follows from the Fubini theorem that
\begin{align*}
H^C&=\sum_{k\geq 2} \sum_{j=2^{k}+1}^{2^{k+1}-2}\sum_{P\in Y_{1,j}}\int_{\mathscr{W}(P)}|\nabla(\mathscr{E}f)|^{p}d\mu_{\alpha}^{\lambda}+\sum_{k\geq 2} \sum_{j=2^{k}+1}^{2^{k+1}-2}\sum_{P\in Y_{2,j}}\int_{\mathscr{W}(P)}|\nabla(\mathscr{E}f)|^{p}d\mu_{\alpha}^{\lambda}\\
&=\sum_{k\geq 2} \sum_{j=2^{k}+1}^{2^{k+1}-2}\sum_{P\in Y_{2,j}} 2^{-jd}(j+2)^{\lambda}\sum_{Q\asymp \tdP} |f_Q-f_{\tdP}|^{p}\\
&\approx\sum_{k\geq 2} (2^k+2)^\lambda \sum_{P'\in \dyadic_{d, 2^k}} m_d(P') \sum_{Q\asymp P'}|f_Q-f_{P'}|^p\Bigg(\sum_{j=2^{k}+1}^{2^{k+1}-2}\sum_{\substack{P\in Y_{2,j}\\ P'=\tdP}} 2^{-jd}/m_d(P')\Bigg).
\end{align*}
Note that for each $P'\in\dyadic_{d,2^{k}}$, there are $\left(2^{(j-2^k)d}-(2^{j-2^k}-2)^d\right)$ many of $P\in Y_{2,j}$ such that $\tdP=P'$. Hence
\begin{align*}
\sum_{j=2^{k}+1}^{2^{k+1}-2}\sum_{\substack{P\in Y_{2,j}\\ P'=\tdP}} 2^{-jd}/m_d(P')&=\sum_{j=2^{k}+1}^{2^{k+1}-2} 2^{(2^k-j)d}\left(2^{(j-2^k)d}-(2^{j-2^k}-2)^d\right)\\
&\leq \sum_{i=0}^{\infty}1-(1-2^{-i})^d\\
&\leq \sum_{i=0}^{\infty} d\cdot 2^{-i}<\infty.
\end{align*}
This yields that
\[H^C\lesssim \sum_{k\geq 2} (2^k+2)^\lambda \sum_{P'\in \dyadic_{d, 2^k}} m_d(P') \sum_{Q\asymp P'}|f_Q-f_{P'}|^p\leq \|f\|^{  p}_{\mathcal B^{\lambda}_{p}(\mathbb{R}^{d})}.\]

By combining the estimates of $H^A$, $H^B$ and $H^C$, we  get that
\[
\int_{X_{1}}|\nabla(\mathscr{E}f)|^{p}d\mu_{\alpha}^{\lambda}
=H^A+H^B+H^C
\lesssim\|f\|_{\mathcal B^{\lambda}_{p}(\mathbb{R}^{d})}^{p}.
\]

By recalling the estimates \eqref{extension-L-norm} and \eqref{eq-add-}, we finally obtain that
$$\|\mathscr{E}f\|_{W^{1, p}(\mathbb{R}^{d+1}_{+},\mu_{p-1}^{\lambda})}=\|\mathscr{E}f\|_{W^{1, p}(\mathbb{R}^{d+1}_{+},\mu_{\alpha}^{\lambda})}\lesssim\|f\|_{\mathcal B^{\lambda}_{p}(\mathbb{R}^{d})}.$$

Since all $m_d$-almost every points in $\real^d$ are the Lebesgue ones of a function $f\in\mathcal B^{\lambda}_{p}(\mathbb{R}^{d})$, it is evident from the definition of the trace operator $\mathscr T$ that $\mathscr T(\mathscr Ef)=f$ for $m_d$-almost every points in $\real^d$ , and hence, the theorem is proved.
\qed

%

\section*{Acknowledgments}
The first author (Manzi Huang) was partly supported by NNSF of China under the number 11822105. The second author (Xiantao Wang) was partly supported by NNSFs of China under the numbers  12071121 and 11720101003 and the project under the number 2018KZDXM034. The third author (Zhuang Wang) was supported by NNSF of China under the number 12101226.

\medskip

\noindent Manzi Huang,

\noindent
MOE-LCSM, School of Mathematics and Statistics, Hunan Normal University, Changsha, Hunan 410081, People's Republic of
China, and School of Mathematical Science, Qufu Normal University, Qufu, Shangdong 273165,
People's Republic of China

\noindent{\it E-mail address}:  \texttt{mzhuang@hunnu.edu.cn}
\bigskip

\noindent Xiantao Wang,

\noindent
MOE-LCSM, School of Mathematics and Statistics, Hunan Normal University, Changsha, Hunan 410081, People's Republic of
China.

\noindent{\it E-mail address}:  \texttt{xtwang@hunnu.edu.cn}\bigskip

\noindent Zhuang Wang,

\noindent
MOE-LCSM, School of Mathematics and Statistics, Hunan Normal University, Changsha, Hunan 410081, People's Republic of
China.

\noindent{\it E-mail address}:  \texttt{zhuang.z.wang@foxmail.com}\bigskip

\noindent Zhihao Xu,

\noindent
MOE-LCSM, School of Mathematics and Statistics, Hunan Normal University, Changsha, Hunan 410081, People's Republic of
China.

\noindent{\it E-mail address}:  \texttt{734669860@qq.com}



\begin{thebibliography}{aaaaa}

\bibitem{Ar} Aronszajn, N.: {\it Boundary value of functions with finite Dirichlet integral}, Techn.~Report 14, University of Kansas, 1955.









\bibitem{DILTV} Dyda, B.,  Ihnatsyeva, L., Lehrb\"ack, J.,  Tuominen, H., V\"ah\"akangas, A.: {\it Muckenhoupt $A_p$-properties of distance functions and applications to Hardy-Sobolev -type inequalities}, Potential Anal. 50 (2019), no. 1, 83-105.




\bibitem{Ga}  Gagliardo, E.: {\it Caratterizzazioni delle tracce sulla frontiera relative ad alcune classi di funzioni in $n$ variabili}, Rend.~Sem.~Mat.~Univ.~Padova 27 (1957), 284-305.






\bibitem{HaMa}  Haj\l asz, P., Martio, P.: {\it Trace spaces of Sobolev functions on fractal type sets and characterization of extension domains}, J.~Funct.~Anal.~143 (1997), no.~1, 221-246.




\bibitem{HKM}  Heinonen, J.,  Kilpel\"ainen, T.,  Martio, O.:  {\it Nonlinear potential theory of degenerate elliptic equations},  Oxford Mathematical Monographs. Oxford Science Publications. The Clarendon Press, Oxford University Press, New York, 1993. vi+363 pp.


\bibitem{IV}  Ihnatsyeva, L., V\"ah\"akangas, A.: {\it Characterization of trace spaces of smooth functions on Ahlfors regular sets}, J.~Funct.~Anal.~265 (2013), no.~9, 1870-1915.





\bibitem{KSW17}  Koskela, P.,  Soto, T.,  Wang, Z.:
\textit{
Trace spaces of weighted function spaces: dyadic norms and Whitney extensions,}
Sci. China Math. 60 (2017), no. 11, 1981-2010.







\bibitem{LS-21}  Lindquist, J.,  Shanmugalingam, N.: {\it Trace spaces and extensions of certain weighted Sobolev spaces on $\real^n$ and Besov functions on Ahlfors regular compact subsets of $\real^n$,} Complex Anal. Synerg. 7  (2021), no. 1, Paper No. 7.

\bibitem{Li}  Lizorkin, P. I.: {\it Boundary properties of functions from ``weight'' classes} (Russian), Dokl.~Akad.~Nauk SSSR 132 (1960), 514--517; translated as Soviet Math.~Dokl.~1 (1960), 589--593.




\bibitem{MirRus}  Mironescu, P., Russ, E.: {\it Trace spaces of weighted Sobolev spaces. Old and new}, Nonlinear Anal.~119 (2015), 354-381.





\bibitem{Ni}  Nikolskii, S. M.: {\it Properties of certain classes of functions of several variables on differentiable manifolds} (Russian), Mat.~Sb.~N.S. 33(75), no.~2 (1953), 261-326.


\bibitem{Pe}  Peetre, J.: {\it New thoughts on Besov spaces}, Duke University Mathematics Series, No.~1. Mathematics Department, Duke University, Durham, N.C., 1976.





\bibitem{SlBa}  Slobodetskii, L. N.,   Babich, V. M.: {\it On boundedness of the Dirichlet integrals} (Russian), Dokl.~Akad.~Nauk SSSR (N.S.) 106 (1956), 604-606.




\bibitem{T}  Triebel, H.: {\it Theory of function spaces}, Monographs in Mathematics, 78. Birkh\"auser Verlag, Basel, 1983.

\bibitem{T4}  Triebel, H.: {\it The structure of functions}, Monographs in Mathematics, 97. Birkh\"auser Verlag, Basel, 2001.

\bibitem{Tyu1}  Tyulenev, A. I.: {\it Description of trace spaces of functions in the Sobolev space with a Muckenhoupt weight}, Proc.~Steklov Inst.~Math.~284 (2014), no. 1, 280-295.

\bibitem{Tyu4} Tyulenev, A. I.: {\it Boundary values of functions in a Sobolev space with weight of Muckenhoupt class on some non-Lipschitz domains}, Mat.~Sb.~205 (2014), no.~8, 67--94; translation in Sb.~Math.~205 (2014), no.~7-8, 1133-1159.

\bibitem{Tyu2} Tyulenev, A. I.: {\it Traces of weighted Sobolev spaces with Muckenhoupt weight. The case $p = 1$}, Nonlinear Anal. 128 (2015), 248-272.



\bibitem{Va}  Va\v{s}arin, A. A.: {\it The boundary properties of functions having a finite Dirichlet integral with a weight} (Russian), Dokl.~Akad.~Nauk SSSR (N.S.) 117 (1957), 742-744.


\end{thebibliography}
\end{document}